\documentclass[onecolumn,11pt]{IEEEtran}
\usepackage{graphicx}
\usepackage{mdwmath,amsmath,amsthm}
\usepackage{amsfonts}
\usepackage[ruled,algonl,lined]{algorithm2e}

\newcommand{\p}{{\rm{P}}}
\newcommand{\n}{{\rm{N}}}

\newtheorem{theorem}{Theorem}

\newtheorem{example}{Example}
\newtheorem{remark}{Remark}
\theoremstyle{definition}
\ifCLASSINFOpdf
  % \usepackage[pdftex]{graphicx}
  % declare the path(s) where your graphic files are
  % \graphicspath{{../pdf/}{../jpeg/}}
  % and their extensions so you won't have to specify these with
  % every instance of \includegraphics
  % \DeclareGraphicsExtensions{.pdf,.jpeg,.png}
\else
  % or other class option (dvipsone, dvipdf, if not using dvips). graphicx
  % will default to the driver specified in the system graphics.cfg if no
  % driver is specified.
  % \usepackage[dvips]{graphicx}
  % declare the path(s) where your graphic files are
  % \graphicspath{{../eps/}}
  % and their extensions so you won't have to specify these with
  % every instance of \includegraphics
  % \DeclareGraphicsExtensions{.eps}
\fi
\hyphenation{op-tical net-works semi-conduc-tor}

\begin{document}
%
% paper title
% Titles are generally capitalized except for words such as a, an, and, as,
% at, but, by, for, in, nor, of, on, or, the, to and up, which are usually
% not capitalized unless they are the first or last word of the title.
% Linebreaks \\ can be used within to get better formatting as desired.
% Do not put math or special symbols in the title.
\title{Control and Detection of Discrete Spectral Amplitudes in Nonlinear Fourier Spectrum}
%
%
% author names and IEEE memberships
% note positions of commas and nonbreaking spaces ( ~ ) LaTeX will not break
% a structure at a ~ so this keeps an author's name from being broken across
% two lines.
% use \thanks{} to gain access to the first footnote area
% a separate \thanks must be used for each paragraph as LaTeX2e's \thanks
% was not built to handle multiple paragraphs
%

\author{Vahid Aref\\
Nokia Bell Labs, Stuttgart, Germany
%\thanks{M. Shell is with the Department
%of Electrical and Computer Engineering, Georgia Institute of Technology, Atlanta,
%GA, 30332 USA e-mail: (see http://www.michaelshell.org/contact.html).}% <-this % stops a space
%\thanks{J. Doe and J. Doe are with Anonymous University.}% <-this % stops a space
%\thanks{Manuscript received April 19, 2005; revised September 17, 2014.}

}

\maketitle

% As a general rule, do not put math, special symbols or citations
% in the abstract or keywords.
\begin{abstract}
Nonlinear Fourier division Multiplexing (NFDM) can be realized from modulating 
the discrete nonlinear spectrum of an $N$-solitary waveform.
To generate an $N$-solitary waveform from 
desired discrete spectrum (eigenvalue and discrete spectral amplitudes), 
we use the Darboux Transform. We explain how to the norming factors must be set in order to
have the desired discrete spectrum. To derive these norming factors, we study the evolution 
of nonlinear spectrum by adding a new eigenvalue and its spectral amplitude. We further simplify
the Darboux transform algorithm.

We propose a novel algorithm (to the best of our knowledge) to numerically compute the nonlinear Fourier Transform (NFT) of a given pulse. The NFT algorithm, called forward-backward method,
is based on splitting the signal into two parts and computing the nonlinear spectrum of each part 
from boundary ($\pm\infty$) inward. The nonlinear spectrum (discrete and continuous) derived from efficiently combining both parts has a promising numerical precision.
This method can use any of one-step discretization NFT methods, e.g.
Crank-Nicolson, as an NFT kernel for the forward or backward part. Using trapezoid rule of integral,
we use an NFT kernel (we called here Trapezoid discretization NFT) in forward-backward method which results discrete spectral amplitudes with a very good numerical precision.
 
These algorithms, forward-backward method and Darboux transform, are used in \cite{aref2015experimental,aref2016designsoliton} for design and detection of phase-modulated 2-soliton pulses, and more recently, in \cite{buelow2016Transmission} for design and detection of more complex pulses with 7 eigenvalues and modulation of spectral phase.
For those soliton pulses, the discrete spectral amplitudes (in particular, phase) of both eigenvalues 
are quite precisely estimated using the forward-backward method.     

\end{abstract}

% Note that keywords are not normally used for peerreview papers.
\begin{IEEEkeywords}
Nonlinear Fourier division Multiplexing (NFDM), Soliton, discrete spectrum, Nonlinear Fourier Transform,
Darboux Transform, forward-backward method
\end{IEEEkeywords}

% For peer review papers, you can put extra information on the cover
% page as needed:
% \ifCLASSOPTIONpeerreview
% \begin{center} \bfseries EDICS Category: 3-BBND \end{center}
% \fi
%
% For peerreview papers, this IEEEtran command inserts a page break and
% creates the second title. It will be ignored for other modes.
\IEEEpeerreviewmaketitle

%\section{Introduction}

\section{Preliminaries: Nonlinear Fourier Transform}

Consider the Nonlinear Schr{\"o}diger (NLS) equation,
\begin{equation}
\label{eq:nls}
\frac{\partial}{\partial z} q(t,z) -\frac{\partial^2}{\partial t^2} q(t,z) = 2\vert q(t,z) \vert^2 q(t,z). 
\end{equation}
Nonlinear Fourier Transform (NFT) (or inverse scattering method) is an efficient way 
to solve this nonlinear differential equation or track the evolution of its solutions.
NFT is a bijective mapping of a solution from time domain into different domain, called nonlinear spectrum. The spectral components transform in $z$ linearly. The inverse nonlinear Fourier transform (INFT) maps the spectral components to the corresponding solution in time domain.
We use Zakharov-Shabat system \cite{shabat1972exact,ablowitz1981solitons,yousefi2013thesis}(also known as AKNS system after the authors of \cite{akns1974inverse})
to characterize nonlinear spectrum. The NLS equation \eqref{eq:nls} is 
the compatibility condition of the following two linear equations:
\begin{equation}
\label{eq:vt}
\frac{\partial}{\partial t}v(t,z;q,\lambda) =
\begin{pmatrix}
-j\lambda & q(t,z)\\
-q^*(t,z) & j\lambda
\end{pmatrix}
v(t,z;q,\lambda),
\end{equation} 
and,
\begin{equation}
\label{eq:vz}
\frac{\partial}{\partial z}v(t,z;q,\lambda) =
\begin{pmatrix}
2j\lambda^2-j\vert q(t,z)\vert^2 & -2\lambda q(t,z)-jq_t(t,z)\\
2\lambda q^*(t,z)-jq^*_t(t,z) & -2j\lambda^2 + j\vert q\vert^2
\end{pmatrix}
v(t,z;q,\lambda),
\end{equation}
where $\lambda$ is the spectral parameter, $v(t,z;q,\lambda)$ is a $2\times 1$ vector, $q_t$ denotes $\partial q/\partial t$ and $q^*$ denotes complex conjugate of $q$. By differentiating 
\eqref{eq:vt} and \eqref{eq:vz} with respect to $z$ and $t$, the right sides become equal if 
$q(t,z)$ satisfies the NLS equation \eqref{eq:nls}. In particular,
this implies that the nonlinear Schr{\"o}dinger equation possess a ``hidden linearity'' in the form of
\eqref{eq:vt} and \eqref{eq:vz}.
The temporal linear operator \eqref{eq:vt} and 
the spatial linear operator \eqref{eq:vz} are called the Lax pair of \eqref{eq:nls}~\cite{yang2010nonlinear}.

We assume that $\vert q(t,z)\vert$ vanishes sufficiently fast as $\vert t\vert\to\infty$, or $z\to\infty$. Then,
\begin{equation*}
\frac{\partial}{\partial t}v(t,z;q,\lambda) \underset{|t|\to\infty}{\to}
\begin{pmatrix}
-j\lambda & 0\\
0 & j\lambda
\end{pmatrix}
v(t,z;q,\lambda).
\end{equation*}
It implies that $v(t,z;q,\lambda)\underset{|t|\to\infty}{\to}\left(c_1 e^{-j\lambda t}, c_2e^{j\lambda t}\right)^T$ for some constants $c_1,c_2\in\mathbb{C}$. Similarly,
$v(t,z;q,\lambda)\underset{z\to\infty}{\to}\left(z_1 e^{+j\lambda^2 z}, z_2e^{-j\lambda^2 z}\right)^T$. Let us focus on the temporal Equation \eqref{eq:vt} and review some results mainly taken from \cite{yousefi2013thesis}. For presentation simplicity, we omit dependency on $z$.
We consider $v(t;\lambda)$ as the elements of the vector space $\mathcal{V}$ equipped
with the Wronskian product,
\begin{equation*}
\mathcal{W}(v,u)=\text{det}\begin{pmatrix}
v_1&u_1\\
v_2&u_2
\end{pmatrix}=v_1u_2-u_1v_2.
\end{equation*}
Define the adjoint of any vector $v(t)\in\mathcal{V}$ as 
\begin{equation*}
\bar{v}(t)=\begin{pmatrix}
v_2^*\\
-v_1^*
\end{pmatrix}.
\end{equation*}
The following properties are shown in \cite[Lemma 7]{yousefi2013thesis}: 
\begin{itemize}
\item[i)] $u(t;\mu)=\bar{v}(t;\lambda=\mu^*)$ is a solution of \eqref{eq:vt} for $\lambda=\mu$ if 
$v(t;\mu^*)$ is its solution for $\lambda=\mu^*$.
\item[ii)] Let $v(t;\lambda)$ be a solution of \eqref{eq:vt}. $\mathcal{W}(\bar{v},v)=\vert v_1\vert^2+ \vert v_2\vert^2=c$ for some constant $c\in\mathbb{R}$ independent of $t$. And,
\item[iii)] if $\mathcal{W}(\bar{v},v)\ne 0$, then $\bar{v}(t;\lambda^*)$ and $v(t;\lambda)$ are linearly independent and hence, they form a basis for the solution subspace of $\lambda$.
\end{itemize}

We need some initial conditions to determine a specific temporal solution. 
Assume that $\lambda\in\mathbb{C}^+$, the upper complex plane.
Let define $\phi^\p(t;\lambda)$, $\bar{\phi}^\p(t;\lambda^*)$, $\phi^\n(t;\lambda)$
and $\bar{\phi}^\n(t;\lambda^*)$ with the following boundary conditions:
\begin{center}
\begin{tabular}{p{.5\linewidth} c}
$
\phi^\p(t;\lambda)\underset{t\to\infty}{\to}\begin{pmatrix}
0\\1
\end{pmatrix}e^{j\lambda t}$,
&
$
\phi^\n(t;\lambda)\underset{t\to-\infty}{\to}\begin{pmatrix}
1\\0
\end{pmatrix}e^{-j\lambda t}
$
\\
$
\bar{\phi}^\p(t;\lambda^*)\underset{t\to\infty}{\to}\begin{pmatrix}
1\\0
\end{pmatrix}e^{-j\lambda t}$,
&
$
\bar{\phi}^\n(t;\lambda^*)\underset{t\to-\infty}{\to}\begin{pmatrix}
0\\-1
\end{pmatrix}e^{j\lambda t}
$
\end{tabular}
\end{center}
We can see that $\mathcal{W}\left(\phi^\p(t;\lambda),\bar{\phi}^\p(t;\lambda^*)\right)=-1$ and
$\mathcal{W}\left(\phi^\n(t;\lambda),\bar{\phi}^\n(t;\lambda^*)\right)=-1$. Therefore both pairs of $(\phi^\p,\bar{\phi}^\p)$ and  $(\phi^\n,\bar{\phi}^\n)$ can be used as a basis for the 
solutions of \eqref{eq:vt} for $\lambda$ and are called \textit{canonical solutions}. Thus,
\begin{equation}
\label{eq:scatter}
\begin{pmatrix}
\phi^\n(t;\lambda),
\bar{\phi}^\n(t;\lambda^*) 
\end{pmatrix}
=
\begin{pmatrix}
\bar{\phi}^\p(t;\lambda^*),
\phi^\p(t;\lambda)
\end{pmatrix}
\begin{pmatrix}
a(\lambda) & b^*(\lambda^*)\\
b(\lambda)&-a^*(\lambda^*)
\end{pmatrix}
,
\end{equation}
where $a(\lambda)=\mathcal{W}(\phi^\n,\phi^\p)$ and $b(\lambda)=\mathcal{W}(\bar{\phi}^\p,\phi^\n)$. Note that $a(\lambda)$ and $b(\lambda)$ are independent of $t$. The matrix
$$
S=
\begin{pmatrix}
a(\lambda) & b^*(\lambda^*)\\
b(\lambda)&-a^*(\lambda^*)
\end{pmatrix}
$$
is called the scattering matrix and it captures the required information to identify $q(t)$. More
precisely, at $t\to-\infty$, where $q(t)$ is absent, $\phi^\n(t;\lambda)\to e^{-j\lambda t}(1,0)^T$.
For any finite $t_0$, $\phi^\n(t_0;\lambda)$ captures the information of $q(t)$, $t\in(-\infty,t_0)$ by integrating over $q(t)$ according to the temporal evolution \eqref{eq:vt}. As we will see,
the signal $q(t)$ can be uniquely described by knowing $a(\lambda)$ and $b(\lambda)$ for some particular $\lambda\in\mathbb{C}$~\cite{ablowitz1981solitons}.

If $q(t)$ decays faster than any polynomial, it is shown in \cite{ablowitz1981solitons} 
that $\phi^\n(t;\lambda)e^{j\lambda t}$ and $\phi^\p(t;\lambda)e^{-j\lambda t}$ are analytic in 
upper-half complex plane and consequently, $a(\lambda)=\mathcal{W}(\phi^\n,\phi^\p)$ is analytic in the same domain.
It implies that the zeros of $a(\lambda)$ are isolated. Note that 
$\bar{\phi}^\p e^{+j\lambda t}$ is only analytic in lower-half complex plane and therefore, $b(\lambda)=\mathcal{W}(\phi^\n,\bar{\phi}^\p)$ is not analytic in general.

Moreover, the projection \eqref{eq:scatter} is well-defined for $\lambda\in\mathbb{R}$ as the canonical solutions are bounded. We already explained that we can make solutions for $\lambda^*$ from the solutions for $\lambda$. Therefore, we can focus on upper or lower half of complex plane. For $\lambda\in\mathbb{C}^+$, $\phi^\n$ and $\bar{\phi}^\p$ are unbounded as $t\to\infty$. However,
the projection \eqref{eq:scatter} is still well-defined when $a(\lambda)=0$. These $\lambda$ 
(and $\lambda^*$)  are the eigenvalues of the corresponding spectral operator:
\begin{equation}
\label{eq:L}
j\begin{pmatrix}
\frac{\partial}{\partial t} & -q(t,z)\\
-q^*(t,z)& -\frac{\partial}{\partial t} 
\end{pmatrix}
v=\lambda v.
\end{equation}
This can be immediately derived from reordering the temporal equation \eqref{eq:vt}.
The Zakharov-Shabat system has two types of spectrum: the discrete spectrum and the 
continuous spectrum. The latter is defined as:
\begin{equation*}
Q_c(\lambda) = \frac{b(\lambda)}{a(\lambda)}, 
\end{equation*}
for $\lambda\in\mathbb{R}$. The continuous spectrum corresponds to the \textit{dispersive} part of 
signal $q(t)$. The descrete spectrum is defined over zeros of $a(\lambda)$:
\begin{equation*}
Q_d(\lambda_i) = \frac{b(\lambda_i)}{a'(\lambda_i)}, 
\end{equation*}
where $a'(\lambda_i)=\frac{d}{d\lambda}a(\lambda)\vert_{\lambda=\lambda_i}$
for all $\{\lambda_1,\dots,\lambda_n\}\subset\mathbb{C}^+\setminus\mathbb{R}$
the finite set satisfying $a(\lambda)=0$.
The discrete spectrum corresponds to the solitonic part of signal $q(t)$. Here, the spectral coefficients $a(\lambda)$ and $b(\lambda)$ are given by
\begin{align}
a(\lambda)&=\lim_{t\to+\infty}e^{+j\lambda t}\phi^\n_1,\nonumber \\
b(\lambda)&=\lim_{t\to+\infty}e^{-j\lambda t}\phi^\n_2
\label{eq:ab}
\end{align}
The signal $q(t)$ can be characterized from $Q_c(\lambda),\lambda\in\mathbb{R}$ and the set of $(\lambda_i,Q_d(\lambda_i))$ using an inverse nonlinear Fourier transform such as Riemann-Hilbert method~\cite{ablowitz1981solitons,yousefi2013thesis,yang2010nonlinear}. In the next section, we introduce another method based on Darboux Transform. This method is very useful when $q(t)$ has only discrete spectrum.

\section{Inverse Transform From Darboux Transform}

The Darboux transform (DT) was first introduced in the context of Sturm-Liouville differential equations. Later, it was extended in the nonlinear integrable
systems, provides the possibility to construct from one solution of an integrable
equation another solution~\cite{matveev1991darboux}. 
It has been shown in \cite{yousefi2013thesis} how we can recursively construct an $N-$solitary waveform
using the Darboux transform. The computational complexity of this method is $O(N^2)$ which is
much smaller than $O(N^3)$ complexity of Riemann-Hilbert method. The complexity can be reduced to almost linear complexity using the techniques in \cite{Wahls2016fastINFT}. 
Consider an $N-$solitary waveform with discrete spectrum $(\lambda_i,Q_d(\lambda_i))$ for $i=1,...,N$. 
The algorithm constructs the signal from initial parameters $(\lambda_i)$ and 
$(A_i e^{-j\lambda_i t},B_i e^{+j\lambda_i t})^T$ for 
$i=1,...,N$. It starts from $q^{(0)}=0$ and recursively adds a new eigenvalue to the 
current waveform.
Although the final waveform has the $N$ desired eigenvalues, 
it was not clear in \cite{yousefi2013thesis} how $(A_i,B_i)$ must be chosen to have the desired $Q_d(\lambda_i)$.
In this section, we derive a closed form solution for $(A_i,B_i)$ which is given in the 
following Algorithm~\ref{alg:DT1}.
\begin{algorithm}
\SetKwInOut{Input}{Input}
\SetKwInOut{Output}{Output}
%\dontprintsemicolon
\Input{$N$ Discrete spectral values $(\lambda_i,Q_d(\lambda_i))$ for $i=1,\dots,N$.}
\Output{Construct $N-$solitary waveform $q(t)$ from given discrete spectrum.}
\BlankLine
\Begin{
\tcc{initialization}
\For{$i\leftarrow 1$ \KwTo $N$}{
$A_i\longleftarrow 1$\;
$B_i\longleftarrow-\frac{Q_d(\lambda_i)}{\lambda_i-\lambda_i^*}\prod_{k=1,k\ne i}^N \frac{\lambda_i-\lambda_k}{\lambda_i-\lambda_k^*}$\;
$v^{(0)}_i(t)\longleftarrow(A_i e^{-j\lambda_i t},B_i e^{j\lambda_i t})^T$\;
}
$q^{(0)} \longleftarrow 0$\;
\tcc{recursively add $(\lambda_i,Q_d(\lambda_i))$ to $q^{(i-1)}(t)$}
\For{$i\leftarrow 1$ \KwTo $N$}{
$(\psi_1,\psi_2)\longleftarrow v_i^{(i-1)}(t)$\;
$q^{(k)}(t)\longleftarrow q^{(i-1)}(t)-2j(\lambda_i-\lambda_i^*)\frac{\psi_2^*(t)\psi_1(t)}{|\psi_1(t)|^2+|\psi_2(t)|^2}$\tcc*[l]{signal update}
\For{$k\leftarrow i+1$ \KwTo $N$}{
$v_{k,1}^{(i)}(t)\longleftarrow \left(\lambda_k -\lambda_i^*-\frac{(\lambda_i-\lambda_i^*)|\psi_1(t)|^2}{|\psi_1(t)|^2+|\psi_2(t)|^2}
\right)v_{k,1}^{(i-1)}(t) -\frac{(\lambda_i-\lambda_i^*)\psi_2^*(t)\psi_1(t)}{|\psi_1(t)|^2+|\psi_2(t)|^2}v_{k,2}^{(i-1)}(t)$\;
$v_{k,2}^{(i)}(t)\longleftarrow 
-\frac{(\lambda_i-\lambda_i^*)\psi_2(t)\psi_1^*(t)}{|\psi_1(t)|^2+|\psi_2(t)|^2}v_{k,1}^{(i-1)}(t)
+\left(\lambda_k -\lambda_i+\frac{(\lambda_i-\lambda_i^*)|\psi_1(t)|^2}{|\psi_1(t)|^2+|\psi_2(t)|^2}
\right)v_{k,2}^{(i-1)}(t)$\;
$v_{k}^{(i)}(t)\longleftarrow\left(v_{k,1}^{(i)}(t),v_{k,2}^{(i)}(t)\right)$\;

}
}
}
\caption{Inverse Nonlinear Fourier Transform from Darboux Transform\label{alg:DT1}}
\end{algorithm}

This algorithm can be simplified further. Algorithm~\ref{alg:DT2} construct the waveform from updating the ratios $\rho^{(k)}_i=-v_{i,2}^{(k)}(t)/v_{i,1}^{(k)}(t)$. Then both the computational and space complexity become almost half. 

\begin{algorithm}
\SetKwInOut{Input}{Input}
\SetKwInOut{Output}{Output}
%\dontprintsemicolon
\Input{$N$ Discrete spectral values $(\lambda_i,Q_d(\lambda_i))$ for $i=1,\dots,N$.}
\Output{Construct $N-$solitary waveform $q(t)$ from given discrete spectrum.}
\BlankLine
\Begin{
\tcc{initialization}
\For{$i\leftarrow 1$ \KwTo $N$}{
$\rho_i^{(0)}(t)\longleftarrow\left(\frac{Q_d(\lambda_i)}{\lambda_i-\lambda_i^*}\prod_{k=1,k\ne i}^N \frac{\lambda_i-\lambda_k}{\lambda_i-\lambda_k^*}\right) e^{2j\lambda_i t}$\;
}
$q^{(0)} \longleftarrow 0$\;
\tcc{recursively add $(\lambda_k,Q_d(\lambda_k))$ to $q^{(k-1)}(t)$}
\For{$i\leftarrow 1$ \KwTo $N$}{
$\rho(t)\longleftarrow \rho_i^{(i-1)}(t)$\;
$q^{(i)}(t)\longleftarrow q^{(i-1)}(t)+2j(\lambda_i-\lambda_i^*)\frac{\rho^*(t)}{1+|\rho(t)|^2}$\tcc*[l]{signal update}
\For{$k\leftarrow i+1$ \KwTo $N$}{
$\rho_k^{(i)}(t) \longleftarrow  
\frac{(\lambda_k -\lambda_i)\rho_{k}^{(i-1)}(t)  +\frac{\lambda_i-\lambda_i^*}{1+|\rho(t)|^2}
(\rho_{k}^{(i-1)}(t)-\rho(t))}
{
\lambda_k -\lambda_i^*-\frac{\lambda_i-\lambda_i^*}{1+|\rho(t)|^2}\left(1 + \rho^*(t)\rho_{k}^{(i-1)}(t) \right)
}$\;
}
}
}
\caption{Inverse Nonlinear Fourier Transform from Darboux Transform\label{alg:DT2}}
\end{algorithm}

Now, we introduce the underlying Darboux Transform. The following theorem
shows how the spectrum changes by applying the Darboux transform. 
The first part explains how the signal must be modified to include a new 
eigenvalue and how the canonical eigenvectors are consequently modified.
This part is the same as Theorem 11 in \cite{yousefi2013thesis} and the proof is
repeated for completeness. As a result, 
the second part shows how the spectrum is modified. It leads to the initial 
values of $(A_i,B_i)$. The proof is rather similar to \cite[Theorem 1]{lin1990evolution}.
\begin{theorem} Consider an absolutely integrable $q(t,z)$ with the scattering
functions $a(\lambda)$ and $b(\lambda)$ at $z=0$.
Assume that the nontrivial $(\psi_1(t,z),\psi_2(t,z))^T$ fulfills \eqref{eq:vt} and
\eqref{eq:vz} for some $\lambda_0$ such that $(\lambda_0-\lambda_0^*)>0$ and $a(\lambda_0)\neq 0$ (it is not an eigenvalue), i.e.
\begin{align}
\left(\frac{\partial}{\partial t}\psi_1(t,z),\frac{\partial}{\partial t}\psi_2(t,z)\right)^T&=P\left(q(t,z),\lambda_0\right)\left(\psi_1(t,z),\psi_2(t,z)\right)^T\nonumber\\
\left(\frac{\partial}{\partial z}\psi_1(t,z),\frac{\partial}{\partial z}\psi_2(t,z)\right)^T&=M\left(q(t,z),\lambda_0\right)\left(\psi_1(t,z),\psi_2(t,z)\right)^T,
\label{eq:psi_updates}
\end{align}
where $P\left(q(t,z),\lambda_0\right)$ and $M\left(q(t,z),\lambda_0\right)$
are given in \eqref{eq:vt} and \eqref{eq:vz}, respectively. Let 
$\Sigma=\Psi \Lambda_0 \Psi^{-1}$, where 
$$
\Psi=
\begin{pmatrix}
\psi_1&\psi_2^*\\
\psi_2&-\psi_1^*
\end{pmatrix},\hspace*{1cm}
\Lambda_0=
\begin{pmatrix}
\lambda_0&0\\
0&\lambda_0^*
\end{pmatrix}.
$$
Define $\tilde{q}(t,z)=q(t,z)-2j(\lambda_0-\lambda_0^*)\frac{\psi_2^*(t,z)\psi_1(t,z)}{|\psi_1(t,z)|^2+ |\psi_2(t,z)|^2}$. Thus,
\begin{itemize}
\item[(a)] If $v(t,z;q,\mu)$ satisfies \eqref{eq:vt} and \eqref{eq:vz}, then
$u(t,z;\tilde{q},\mu)=\left(\mu I-\Sigma\right)v(t,z;q,\mu)$ satisfies 
\eqref{eq:vt} and \eqref{eq:vz} for $\tilde{q}(t,z)$ and spectral parameter $\mu$.
\item[(b)] Assume that there are non-zero constants
\begin{align}
\label{eq:psi_limits}
\lim_{t\to+\infty}e^{j\lambda_0 t}\psi_1(t)=\psi_1^{+},\lim_{t\to-\infty}e^{-j\lambda_0 t}\psi_2(t)=\psi_2^{-}
\end{align}
(They are not zero as  $\lambda_0$ is not an eigenvalue of $q$).
The discrete spectrum of $\tilde{q}(t,z)$ consists of the eigenvalues of 
$q(t,z)$ and also $\lambda_0$ (and $\lambda_0^*$). Let $\tilde{a}(\lambda)$ and $\tilde{b}(\lambda)$ denote the scattering functions of $\tilde{q}(t)\doteq \tilde{q}(t,z=0)$. Then,
\begin{align*}
\tilde{a}(\lambda)&=\frac{\lambda-\lambda_0}{\lambda-\lambda^*_0}
a(\lambda),\text{ for }\lambda\in\mathbb{C}^+,\\
\tilde{b}(\lambda)&=b(\lambda) \text{ for }\lambda\in\mathbb{R}\cup\{\text{eigenvalues of }q(t,z)\},\\
b(\lambda_0)&=-\frac{\psi_2^-}{\psi_1^+},\\
\tilde{Q}_c(\lambda)&=\frac{\lambda-\lambda^*_0}{\lambda-\lambda_0}Q_c(\lambda),\\
\tilde{Q}_d(\lambda_i)&=\frac{\lambda_i-\lambda^*_0}{\lambda_i-\lambda_0}Q_d(\lambda_i)\text{ for }\lambda_i\in\{\text{eigenvalues of }q(t,z)\},\\
\tilde{Q}_d(\lambda_0)&=
 -\frac{\lambda_0-\lambda_0^*}{a(\lambda_0)}\times\frac{\psi_2^-}{\psi_1^+}
\end{align*}
where $\tilde{Q}_c(\cdot)$ and $\tilde{Q}_d(\cdot)$ denote the
 continuous and discrete spectrum of $\tilde{q}$.
\end{itemize}
\end{theorem}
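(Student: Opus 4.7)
The plan is as follows. For part~(a), I would follow the classical Darboux calculation of Theorem~11 of \cite{yousefi2013thesis}: expand $\Sigma=\Psi\Lambda_0\Psi^{-1}$ into its explicit entries $\Sigma_{11}=(\lambda_0|\psi_1|^2+\lambda_0^*|\psi_2|^2)/(|\psi_1|^2+|\psi_2|^2)$, $\Sigma_{12}=(\lambda_0-\lambda_0^*)\psi_1\psi_2^*/(|\psi_1|^2+|\psi_2|^2)$, and the conjugate entries; differentiate in $t$ using the Lax equation $\psi_t=P(q,\lambda_0)\psi$; then substitute into $u_t=(\mu I-\Sigma)P(q,\mu)v-\Sigma_t v$ and check term by term that the result has the form $P(\tilde q,\mu)u$ provided the off-diagonal entry of $\tilde q$ is taken to be $q-2j(\lambda_0-\lambda_0^*)\psi_2^*\psi_1/(|\psi_1|^2+|\psi_2|^2)$. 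The identical manipulation with $M(q,\lambda_0)$ in place of $P$ handles the $z$-equation and fixes the same $\tilde q$ consistently for both Lax equations.

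For part~(b), the first ingredient is the asymptotic of $\Sigma(t)$ itself. Since $\lambda_0\in\mathbb{C}^+$ is not an eigenvalue, $\psi$ is generic and behaves like $\psi_1\sim\psi_1^\pm e^{-j\lambda_0 t}$, $\psi_2\sim\psi_2^\pm e^{j\lambda_0 t}$ at $t\to\pm\infty$; one exponential always dominates the other in $|\psi_1|^2+|\psi_2|^2$, giving $\Sigma\to\mathrm{diag}(\lambda_0,\lambda_0^*)$ at $+\infty$ and $\Sigma\to\mathrm{diag}(\lambda_0^*,\lambda_0)$ at $-\infty$. I would then set $\tilde\phi^\n(t;\lambda)=\frac{1}{\lambda-\lambda_0^*}(\lambda I-\Sigma)\phi^\n(t;\lambda)$, which by part~(a) satisfies the Lax equations for $\tilde q$ and, using the $-\infty$ asymptotic of $\Sigma$, matches the canonical normalization $(1,0)^T e^{-j\lambda t}$. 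Reading the $+\infty$ limit from $\phi^\n_1\to a(\lambda)e^{-j\lambda t}$ and $\phi^\n_2\to b(\lambda)e^{j\lambda t}$ produces $\tilde a(\lambda)=\frac{\lambda-\lambda_0}{\lambda-\lambda_0^*}a(\lambda)$ and $\tilde b(\lambda)=b(\lambda)$ for $\lambda\in\mathbb{R}\cup\{\lambda_i\}$; the restriction comes because the off-diagonal contribution $-\Sigma_{21}\phi^\n_1$ to $\tilde\phi^\n_2$ has a coefficient proportional to $a(\lambda)$, so it vanishes either because it is truly subdominant (real $\lambda$) or because $a(\lambda_i)=0$. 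Differentiating $\tilde a$ at $\lambda_i$ and using $a(\lambda_i)=0$ gives $\tilde a'(\lambda_i)=\frac{\lambda_i-\lambda_0}{\lambda_i-\lambda_0^*}a'(\lambda_i)$, from which the formulas for $\tilde Q_c$ and $\tilde Q_d(\lambda_i)$ follow.

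The delicate step is $\tilde b(\lambda_0)$. The key observation is that $\lambda_0 I-\Sigma=\Psi\,\mathrm{diag}(0,\lambda_0-\lambda_0^*)\,\Psi^{-1}$ is rank one with image spanned by $\bar\psi=(\psi_2^*,-\psi_1^*)^T$, so I can write $\tilde\phi^\n(t;\lambda_0)=\frac{\psi_2^-}{|\psi_1|^2+|\psi_2|^2}\bar\psi(t)$, where the constant $-\psi_2^-$ appears as the $t$-invariant Wronskian $\mathcal{W}(\psi,\phi^\n)$ evaluated at $t\to-\infty$ (only the $\phi^\n_1\psi_2$ piece survives). Letting $t\to+\infty$, $|\psi_1|^2$ dominates the denominator and $\bar\psi_2\sim-(\psi_1^+)^*e^{j\lambda_0^* t}$; the exponentials recombine to $e^{j\lambda_0 t}$ and yield $\tilde\phi^\n_2(t;\lambda_0)\to-(\psi_2^-/\psi_1^+)e^{j\lambda_0 t}$, hence $\tilde b(\lambda_0)=-\psi_2^-/\psi_1^+$. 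Combined with $\tilde a'(\lambda_0)=a(\lambda_0)/(\lambda_0-\lambda_0^*)$ (the quotient-rule derivative of $\tilde a$ at its new zero), this delivers $\tilde Q_d(\lambda_0)=-\tfrac{\lambda_0-\lambda_0^*}{a(\lambda_0)}\cdot\tfrac{\psi_2^-}{\psi_1^+}$. The main obstacle I anticipate is the careful bookkeeping of dominant versus subdominant exponentials when $\lambda$ is complex; once this is organized, every identity reduces to finite-dimensional linear algebra on $\Psi$, $\bar\psi$, and $\phi^\n$.
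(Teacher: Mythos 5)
Your proposal follows essentially the same route as the paper's proof: part~(a) by direct substitution of $u_t=(\mu I-\Sigma)v_t-\Sigma_t v$ with $\Sigma_t$ computed from the Lax equation, and part~(b) by the diagonal limits of $\Sigma$ at $t\to\pm\infty$, the transformed canonical solution $\frac{1}{\lambda-\lambda_0^*}(\lambda I-\Sigma)\phi^{\rm N}$, the rank-one factorization at $\lambda=\lambda_0$ with the constant identified as the Wronskian $\mathcal{W}(\psi,\phi^{\rm N})=-\psi_2^-$, and the quotient-rule evaluation of $\tilde a'$ at the old and new zeros. The only cosmetic difference is that at the eigenvalues $\lambda_i$ you kill the off-diagonal term via $a(\lambda_i)=0$ directly, whereas the paper invokes the preserved proportionality $\phi^{\rm N}=b(\lambda_i)\phi^{\rm P}$; both rest on the same fact.
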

\begin{proof}
Let us begin from part $(a)$. We already discussed that if $(\psi_1,\psi_2)^T$ satisfies \eqref{eq:vt} and \eqref{eq:vz} for some $\lambda$, then  its adjoint $(\psi^*_2,-\psi^*_1)$
satisfies \eqref{eq:vt} and \eqref{eq:vz} for $\lambda^*$. Here, we show $u(t,z;\tilde{q},\mu)$
fulfills the temporal equation \eqref{eq:vt} and fulfilling the spatial equation similarly follows.
Define
\begin{equation*}
J=j
\begin{pmatrix}
-1&0\\
0&1
\end{pmatrix}, \text{ and }
Q=\begin{pmatrix}
0&q(t,z)\\
-q^*(t,z) &0
\end{pmatrix}.
\end{equation*}
Then from \eqref{eq:vt}, $v_t = \mu J v+Qv$ and $\Psi_t=J\Psi\Lambda_0 + Q\Psi$. Thus,
\begin{align*}
\Sigma_t & = \Psi_t\Lambda_0\Psi^{-1} + \Psi\Lambda_0(\Psi^{-1})_t= \Psi_t\Lambda_0\Psi^{-1} - \Psi\Lambda_0\Psi^{-1}\Psi_t\Psi^{-1}\\
& = J\Psi\Lambda_0^2\Psi^{-1} + Q\Psi\Lambda_0\Psi^{-1} - \Psi\Lambda_0\Psi^{-1} (J\Psi\Lambda_0 + Q\Psi)\Psi^{-1}\\
& = J\Sigma^2 +Q\Sigma - \Sigma J \Sigma - \Sigma Q.
\end{align*}
From definition of $u$, we have
\begin{align*}
u_t &= (\mu I -\Sigma)v_t -\Sigma_t v = (\mu I -\Sigma)(\mu J v + Qv)-J\Sigma^2v-Q\Sigma v
+ \Sigma J \Sigma v + \Sigma Qv\\
&=\mu J (\mu I)v +\mu Qv-\mu \Sigma Jv -\Sigma Q v -J\Sigma^2v-Q\Sigma v
+ \Sigma J \Sigma v + \Sigma Qv\\
&= \mu J (\mu I-\Sigma + \Sigma)v + \mu Qv - Q\Sigma v -\mu\Sigma J v -(J\Sigma-\Sigma J)\Sigma v\\
&=\mu J (\mu I-\Sigma)v + Q(\mu I-\Sigma)v + \mu J\Sigma v -\mu\Sigma J v -(J\Sigma-\Sigma J)\Sigma v\\
&=\mu J (\mu I-\Sigma)v + Q(\mu I-\Sigma)v + (J\Sigma-\Sigma J)(\mu I-\Sigma)v\\
&= (\mu J + Q + J\Sigma-\Sigma J )u.
\end{align*}
We have $u_t=(\mu J + \tilde{Q})u$, where 
\begin{equation*}
\tilde{Q}=Q+J\Sigma-\Sigma J = \begin{pmatrix}
0&q(t,z)\\
-q^*(t,z)&0
\end{pmatrix}
+
\begin{pmatrix}
0&-2j(\lambda_0-\lambda_0^*)\psi_1\psi_2^*\\
2j(\lambda_0-\lambda_0^*)\psi_1^*\psi_2&0.
\end{pmatrix}
\end{equation*}
Now we prove part $(b)$. It is enough to compute the change of scattering functions 
at some $z$, let say $z=0$. Then the evolution immediately follows from $a(\lambda,z)=a(\lambda,z=0)$ and $b(\lambda,z)=b(\lambda,z=0)\exp(-4j\lambda^2z)$. 
We have
\begin{equation*}
\Sigma = \frac{1}{|\psi_1|^2+ |\psi_2|^2}
\begin{pmatrix}
\lambda_0|\psi_1|^2 + \lambda^*_0|\psi_2|^2
& (\lambda_0-\lambda^*_0)\psi_2^*\psi_1\\
(\lambda_0-\lambda^*_0)\psi_1^*\psi_2
& \lambda_0|\psi_2|^2 + \lambda^*_0|\psi_1|^2
\end{pmatrix}
\end{equation*}

Note that $(\psi_1(t),\psi_2(t))^T$ satisfies \eqref{eq:vt} for $\lambda_0$ which implies that,
\begin{align*}
\begin{pmatrix}
\psi_1(t)\\
\psi_2(t)
\end{pmatrix}
\underset{t\to{\pm}\infty}{\sim}
\begin{pmatrix}
c_1^{\pm} e^{-j\lambda_0 t}\\ 
c_2^{\pm}e^{+j\lambda_0 t}
\end{pmatrix},
\end{align*}
for some constants $c_1^{\pm}$ and $c_2^{\pm}$. Therefore, 
\begin{equation}
\label{eq:sigma_limit}
\lim_{t\to-\infty}\Sigma=
\begin{pmatrix}
\lambda_0^*&0\\
0&\lambda_0
\end{pmatrix}, 
\lim_{t\to+\infty}\Sigma=
\begin{pmatrix}
\lambda_0&0\\
0&\lambda_0^*
\end{pmatrix}
\end{equation}

We study how the canonical eigenvector modifies. Consider first spectral parameter $\lambda\neq\lambda_0$. One can verify from \eqref{eq:sigma_limit} that
\begin{align}
\phi^\n(t;\lambda,\tilde{q})&=
\frac{1}{\lambda-\lambda^*_0}(\lambda I- \Sigma)\phi^\n(t;\lambda,q),\nonumber\\
\phi^\p(t;\lambda,\tilde{q})&=
\frac{1}{\lambda-\lambda^*_0}(\lambda I- \Sigma)\phi^\p(t;\lambda,q).
\label{eq:EV_evolution}
\end{align}
From the definition, $\lim_{t\to\infty} \phi_1^\n(t;\lambda,q)\exp(j\lambda t)=a(\lambda)$ and
 $\lim_{t\to\infty} \phi_2^\n(t;\lambda,q)\exp(-j\lambda t)=b(\lambda)$. Thus, 
\begin{equation*}
\phi^\n_1(t;\lambda,\tilde{q})e^{j\lambda t}=
\frac{1}{\lambda-\lambda_0^*}\left(
\frac{(\lambda-\lambda_0)|\psi_1|^2 + (\lambda-\lambda_0^*)|\psi_2|^2}{|\psi_1|^2+ |\psi_2|^2}
\phi^\n_1(t;\lambda,q)e^{j\lambda t} 
-
\frac{(\lambda-\lambda_0^*)\psi_2^*\psi_1}{|\psi_1|^2+ |\psi_2|^2}
\phi^\n_2(t;\lambda,q)e^{j\lambda t}\right)
\end{equation*}
Since each term in the right side has a limit as $t\to\infty$, we have
\begin{align*}
\tilde{a}(\lambda)&= \lim_{t\to\infty} \phi_1^\n(t;\lambda,\tilde{q})\exp(j\lambda t)
=\frac{1}{\lambda-\lambda_0^*} 
\lim_{t\to\infty}\frac{(\lambda-\lambda_0)|\psi_1|^2 + (\lambda-\lambda_0^*)|\psi_2|^2}{|\psi_1|^2+ |\psi_2|^2}
\lim_{t\to\infty}\phi^\n_1(t;\lambda,q)e^{j\lambda t}\\ 
&= \frac{\lambda-\lambda_0}{\lambda-\lambda_0^*}
a(\lambda)
\end{align*}
Since $\tilde{a}(\lambda)$ is analytic 
for $\lambda\in\mathbb{C}^+$, 
and therefore, $\tilde{a}(\lambda_0)=0$.
It implies that $\tilde{q}(t)$ consists of $\lambda_0$ and the eigenvalues of $q(t)$. 
Now we compute $b(\lambda)$ for $\lambda\in\mathbb{R}$
and for the eigenvalues. Let first assume that 
$\lambda\in\mathbb{R}$. Then,
\begin{align*}
\phi^\n_2(t;\lambda,\tilde{q})e^{-j\lambda t}=
\frac{1}{\lambda-\lambda_0^*}\left(
-
\frac{(\lambda-\lambda_0^*)\psi_2^*\psi_1}{|\psi_1|^2+ |\psi_2|^2}
\phi^\n_1(t;\lambda,q)e^{-j\lambda t}+
\frac{(\lambda-\lambda_0^*)|\psi_1|^2 + (\lambda-\lambda_0)|\psi_2|^2}{|\psi_1|^2+ |\psi_2|^2}
\phi^\n_2(t;\lambda,q)e^{-j\lambda t} 
\right)
\end{align*}
Since $|e^{-j\lambda t}|=1$,
we can see that $\phi^\n_1(t;\lambda,q)e^{-j\lambda t}=
e^{-2j\lambda t}(\phi^\n_1(t;\lambda,q)e^{+j\lambda t})$ is bounded as $t\to\infty$ and thus,
\begin{align*}
\tilde{b}(\lambda)&= \lim_{t\to\infty} \phi_2^\n(t;\lambda,\tilde{q})\exp(-j\lambda t)
=\frac{1}{\lambda-\lambda_0^*} 
\lim_{t\to\infty}\frac{(\lambda-\lambda_0^*)|\psi_1|^2 + (\lambda-\lambda_0)|\psi_2|^2}{|\psi_1|^2+ |\psi_2|^2}
\lim_{t\to\infty}\phi^\n_2(t;\lambda,q)e^{-j\lambda t}\\ 
&= b(\lambda)
\end{align*}
Now let $\lambda=\lambda_j$ an eigenvalue of $q(t)$. Then, $\tilde{a}(\lambda_j)=a(\lambda_j)=0$ and we know that $\phi^\n(t;\lambda_j,q)=b(\lambda_j)\phi^\p(t;\lambda_j,q)$ and 
$\phi^\n(t;\lambda_j,\tilde{q})=\tilde{b}(\lambda_j)\phi^\p(t;\lambda_j,\tilde{q})$. From \eqref{eq:EV_evolution}, $\tilde{b}(\lambda_j)=b(\lambda_j)$. For $\lambda=\lambda_0$,
\begin{align*}
u(t;\lambda_0,\tilde{q})&=\frac{\lambda_0-\lambda_0^*}{|\psi_1|^2+|\psi_2|^2}
\begin{pmatrix}
|\psi_2|^2&-\psi_1\psi_2^*\\
-\psi_1^*\psi_2&|\psi_1|^2
\end{pmatrix}
v(t;\lambda_0,q)\\
&=\frac{\lambda_0-\lambda_0^*}{|\psi_1|^2+|\psi_2|^2} 
\begin{pmatrix}
-\psi_2^*\\
\psi_1^*
\end{pmatrix}\mathcal{W}\left((\psi_1,\psi_2)^T,v\right),
\end{align*}
where $\mathcal{W}(\cdot)$ is the Wronskian product. If $v(t;\lambda_0,q)$ fulfills \eqref{eq:vt},
then we can easily verify that $\mathcal{W}\left((\psi_1,\psi_2)^T,v\right)=C$ a non-zero constant independent of $t$ if  $v(t;\lambda_0,q)\neq \alpha(\psi_1,\psi_2)^T$, $\alpha\in\mathbb{C}$~\cite[Lemma~7]{yousefi2013thesis}. Thus,
\begin{align*}
u(t;\lambda_0,\tilde{q})=\frac{C(\lambda_0-\lambda_0^*)}{|\psi_1|^2+|\psi_2|^2} 
\begin{pmatrix}
-\psi_2^*\\
\psi_1^*
\end{pmatrix}.
\end{align*}
Taking the limit and we can define from \eqref{eq:psi_limits},
\begin{equation*}
\phi^\n(t;\lambda_0,\tilde{q})=\frac{\psi_2^-}{|\psi_1|^2+|\psi_2|^2}
\begin{pmatrix}
\psi_2^*\\
-\psi_1^*
\end{pmatrix}.
\end{equation*}
One can directly verify the above eigenvector in \eqref{eq:vt}. Therefore,
\begin{align*}
b(\lambda_0)& =\lim_{t\to+\infty}\phi_2^\n(t;\lambda_0,\tilde{q})e^{-j\lambda_0 t}
=\lim_{t\to+\infty} \frac{-\psi_2^-}{|\psi_1|^2+|\psi_2|^2}\psi_1^*e^{-j\lambda_0 t}\\
&=-\frac{\psi_2^-}{\psi_1^+}.
\end{align*}
Now we can compute continuous and discrete spectrum from $\tilde{a}(\lambda)$ and 
$\tilde{b}(\lambda)$. For $\lambda\in\mathbb{R}$,
\begin{align*}
\tilde{Q}_c(\lambda) &= \frac{\tilde{b}(\lambda)}{\tilde{a}(\lambda)} =
\frac{\lambda-\lambda_0^*}{\lambda-\lambda_0}\frac{b(\lambda)}{a(\lambda)}\\
&=\frac{\lambda-\lambda_0^*}{\lambda-\lambda_0} Q_c(\lambda)
\end{align*}
For $\lambda$ chosen from the eigenvalues of $q$, $\frac{d}{d\lambda}\tilde{a}(\lambda)|_{\lambda=\lambda_j} = \frac{\lambda_j-\lambda_0}{\lambda_j-\lambda_0^*}\times\frac{d}{d\lambda}a(\lambda)|_{\lambda=\lambda_j}$ and hence,
\begin{equation*}
\tilde{Q}_d(\lambda_j) = \frac{\lambda_j-\lambda_0^*}{\lambda_j-\lambda_0} Q_d(\lambda_j).
\end{equation*}
At $\lambda=\lambda_0$, $\frac{d}{d\lambda}\tilde{a}(\lambda)|_{\lambda=\lambda_0} = \frac{a(\lambda_0)}{\lambda_0-\lambda_0^*}$ and
\begin{equation*}
\tilde{Q}_d(\lambda_0) = -\frac{\lambda_0-\lambda_0^*}{a(\lambda_0)}\times\frac{\psi_2^-}{\psi_1^+} .
\end{equation*}
\end{proof}
From Theorem 1, we can easily show the initial conditions $(A_i,B_i)$ in Algorithm~\ref{alg:DT1}. Let $(a^{(i)}(\lambda),b^{(i)}(\lambda))$ denote the scattering functions
of $q^{(i)}(t)$. For $q^{(0)}=0$, $a^{(0)}(\lambda)=1$. 
Consider the recursion $i,i\ge 1$.
From Algorithm~\ref{alg:DT1},
\begin{equation*}
\begin{pmatrix}
\psi_1\\
\psi_2
\end{pmatrix}=v_i^{(i-1)}(t) = (\lambda_i I-\Sigma_{i-1})(\lambda_i I-\Sigma_{i-2})\cdots(\lambda_i I-\Sigma_{1})\times
\begin{pmatrix}
A_i e^{-j\lambda_i t}\\
B_i e^{+j\lambda_i t}
\end{pmatrix}.
\end{equation*}
Although $\Sigma_{k}$ depends on $v_k^{(k-1)}(t)$, its limits are only a function of $\lambda_k$
(see \eqref{eq:sigma_limit}), i.e. 
\begin{equation*}
\lim_{t\to-\infty}\Sigma_k=
\begin{pmatrix}
\lambda_k^*&0\\
0&\lambda_k
\end{pmatrix}, 
\lim_{t\to+\infty}\Sigma_k=
\begin{pmatrix}
\lambda_k&0\\
0&\lambda_k^*
\end{pmatrix}
\end{equation*}
It implies that $\psi_1^+=\prod_{k=1}^{i-1}(\lambda_i-\lambda_k)A_i$, and $\psi_2^-=\prod_{k=1}^{i-1}(\lambda_i-\lambda_k)B_i$. They are finite non-zero values, and 
we can apply part $(b)$ of Theorem 1. We can verify that $a^{(i)}=\prod_{k=1}^i\frac{\lambda-\lambda_k}{\lambda-\lambda_k^*}$ and $b^{(i)}(\lambda_i)=-\frac{\psi_2^-}{\psi_1^+}=-\frac{B_i}{A_i}$.
At the end, $b^{(N)}(\lambda_i)=b^{(i)}(\lambda_i)$ ($b(\lambda)$ at eigenvalues does not change
by applying the Darboux transform) and $a^{(N)}=\prod_{k=1}^N\frac{\lambda-\lambda_k}{\lambda-\lambda_k^*}$. It implies that
\begin{equation*}
Q_d(\lambda_i)= (\lambda_i-\lambda_i^*)\prod_{k=1,k\ne i}^N \frac{\lambda_i-\lambda_k^*}{\lambda_i-\lambda_k}\times\frac{-B_i}{A_i}.
\end{equation*}
Letting $A_i=1$, we have the desired $B_i$.

\section{Nonlinear Fourier Transform from Forward-Backward Method}

The forward-backward method is a general method which can be applied to 
any integration based algorithms introduced in \cite{yousefi2013thesis}  
such as forward and central discretizations, Crank-Nicolson method and Ablowitz-Ladik
algorithm. Before describing forward-backward method, 
we propose a new integration based algorithm 
which has a simple representation and 
estimates the spectrum with higher precision than the above mentioned algorithms.
We show that the forward discretization and Crank-Nicolson methods are the first order and the second order Taylor expansion of the new algorithm.
The complexity of this algorithm is quadratic in the number of samples which can be reduced applying techniques in \cite{Wahls2016fastNFT}. 

\subsection{Numerical Trapezoid Discretization}

Consider the temporal differential equation \eqref{eq:vt} and  its canonical 
solution $\phi^\n(t;\lambda)$.
\begin{equation}
\label{eq:phit}
\frac{\partial}{\partial t}\phi^\n(t;\lambda) =
\begin{pmatrix}
-j\lambda & q(t)\\
-q^*(t) & j\lambda
\end{pmatrix}
\phi^\n(t;\lambda).
\end{equation}  
The Nonlinear spectrum of signal $q(t)$ is characterized by the continuous part  
\begin{equation*}
Q_c(\lambda)=\frac{b(\lambda)}{a(\lambda)},
\end{equation*}
for $\lambda\in\mathcal{R}$ and the discrete part
\begin{equation*}
Q_d(\lambda_i) = \frac{b(\lambda_i)}{a'(\lambda_i)}, 
\end{equation*}
where $a'(\lambda_i)=\frac{d}{d\lambda}a(\lambda)\vert_{\lambda=\lambda_i}$
for all $\{\lambda_1,\dots,\lambda_n\}\subset\mathbb{C}^+\setminus\mathbb{R}$
the finite set satisfying $a(\lambda)=0$.
The discrete spectrum corresponds to the solitonic part of signal $q(t)$. The spectral coefficients $a(\lambda)$ and $b(\lambda)$ are given by
\begin{equation*}
a(\lambda)=\lim_{t\to+\infty}e^{+j\lambda t}\phi^\n_1,b(\lambda)=\lim_{t\to+\infty}e^{-j\lambda t}\phi^\n_2
\end{equation*}
In optical communication, each transmitted symbol has a finite support, thus
We assume that $q(t)$ is truncated such that $q(t)=0$ for $t\notin[-T_0,T_0]$.
Note that we assume that the support is symmetric around zero. 
This can be done by taking a larger symmetric support or more efficiently,  
by translation in time which modifies the nonlinear spectrum by a multiplicative factor.
Therefore $\phi^\n(t;\lambda)=(1,0)^Te^{-j\lambda t}$ for $t\leq -T_0$. 
Let 
$$
\psi(t;\lambda)=\begin{pmatrix}
\psi_1\\
\psi_2
\end{pmatrix}=
\begin{pmatrix}
\phi^\n_1e^{+j\lambda t}\\
\phi^\n_2e^{-j\lambda t}
\end{pmatrix}
$$
Then $\psi(t;\lambda)=(1,0)^T$ for $t\leq -T_0$ and the temporal equation \eqref{eq:phit} 
is changed to
\begin{equation}
\label{eq:psit}
\frac{\partial}{\partial t}\psi(t;\lambda) =
\begin{pmatrix}
0 & q(t)e^{+j2\lambda t}\\
-q^*(t)e^{-j2\lambda t} & 0
\end{pmatrix}
\psi(t;\lambda),
\end{equation}
and $a(\lambda)=\lim_{t\to\infty}\psi_1(t)=\psi_1(T_0)$ and $b(\lambda)=\lim_{t\to\infty}\psi_2(t)=\psi_2(T_0)$. 

We use the trapezoid rule to numerically compute $\psi(T_0;\lambda)$. 
Let us first explain this method in the following example.

\begin{example}
Consider the following scalar ordinary differential equation:
\begin{equation*}
\frac{d}{dt} x(t)=f(t)x(t), \text{ with initial condition } x(0)=1.
\end{equation*}
Its unique solution is 
$$
x(T)=\exp(\int_{0}^T f(\tau)\text{d}\tau)
$$
Now we use the trapezoid rule to estimate the integral. Let $t_n=hn$, $0\leq n\leq N$ where $h=T/N$, then
$$
\int_{0}^t f(\tau)\text{d}\tau = h\sum_{n=0}^{N}\frac{1}{2}(f(t_n)+f(t_{n+1})) + R_e
$$

and the error term is $R_e=\frac{T^3}{12N^2}f''(t_c)$, for some $t_c\in[0,T]$. Neglecting the error
term, we can estimate $x(T)\approx x_N$ recursively as follows:
\begin{equation}
\label{eq:scalerupdate}
x_{n+1} = e^{\frac{h}{2}f(t_{n+1})}e^{\frac{h}{2}f(t_{n})}x_n
\end{equation}

Defining $y_n=e^{\frac{h}{2}f(t_{n})}x_n$,
$$
y_{n+1}=e^{hf(t_{n+1})}y_n
$$
and $y_0=\exp({\frac{h}{2}f(0)})$. Note that $x_{n+1}$ is the exact solution of following differential equation at $t=t_n$,
\begin{equation*}
\frac{d}{dt}x(t)=\tilde{f}(t)x(t), \text{ with initial condition } x(0)=1,
\end{equation*}
and $\tilde{f}(t)$ is a step function defined as
\begin{equation*}
\tilde{f}(t)=
\begin{cases}
f(0)&0\leq t<h/2\\
f(t_n)& nh-h/2\leq t<nh+h/2 \text{ and } 1\leq n<N\\
f(T)& T-h/2\leq t \leq T
\end{cases}
\end{equation*}
The precision of this method is shown for two test functions $f(t)=t$ and $f(t)=sech(t)$ in Figure~\ref{fig:example1}.
We also compare this method with the Forward (Euler) method and the Crank-Nicolson method.

\end{example}

\begin{figure}[t]
\centering
\setlength{\unitlength}{\textwidth}
\begin{picture}(1,.3)(0,0)
%\put(0,0){\includegraphics[width=.5\unitlength]{figures/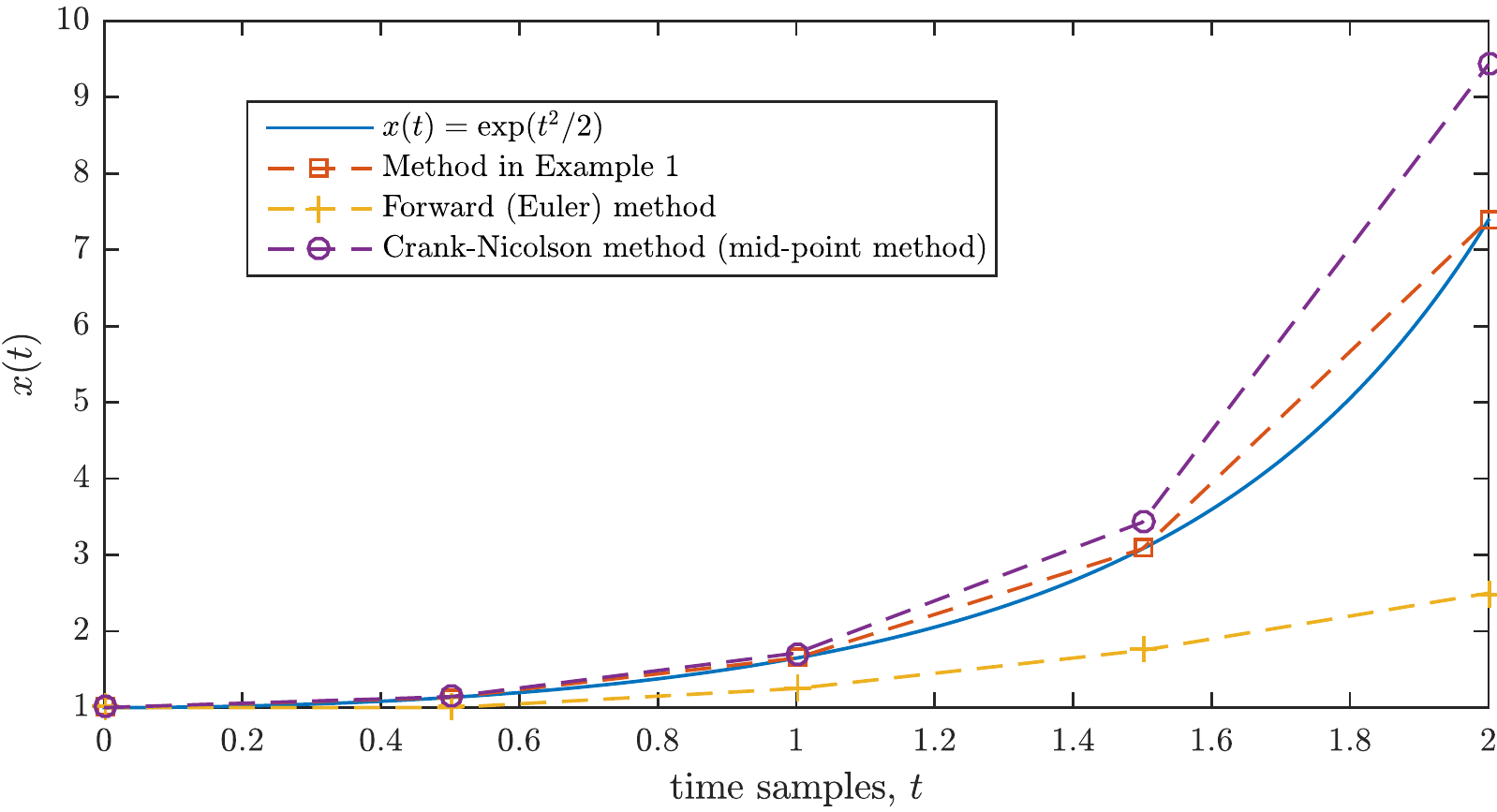}}
%\put(0.5,0){\includegraphics[width=.5\unitlength]{figures/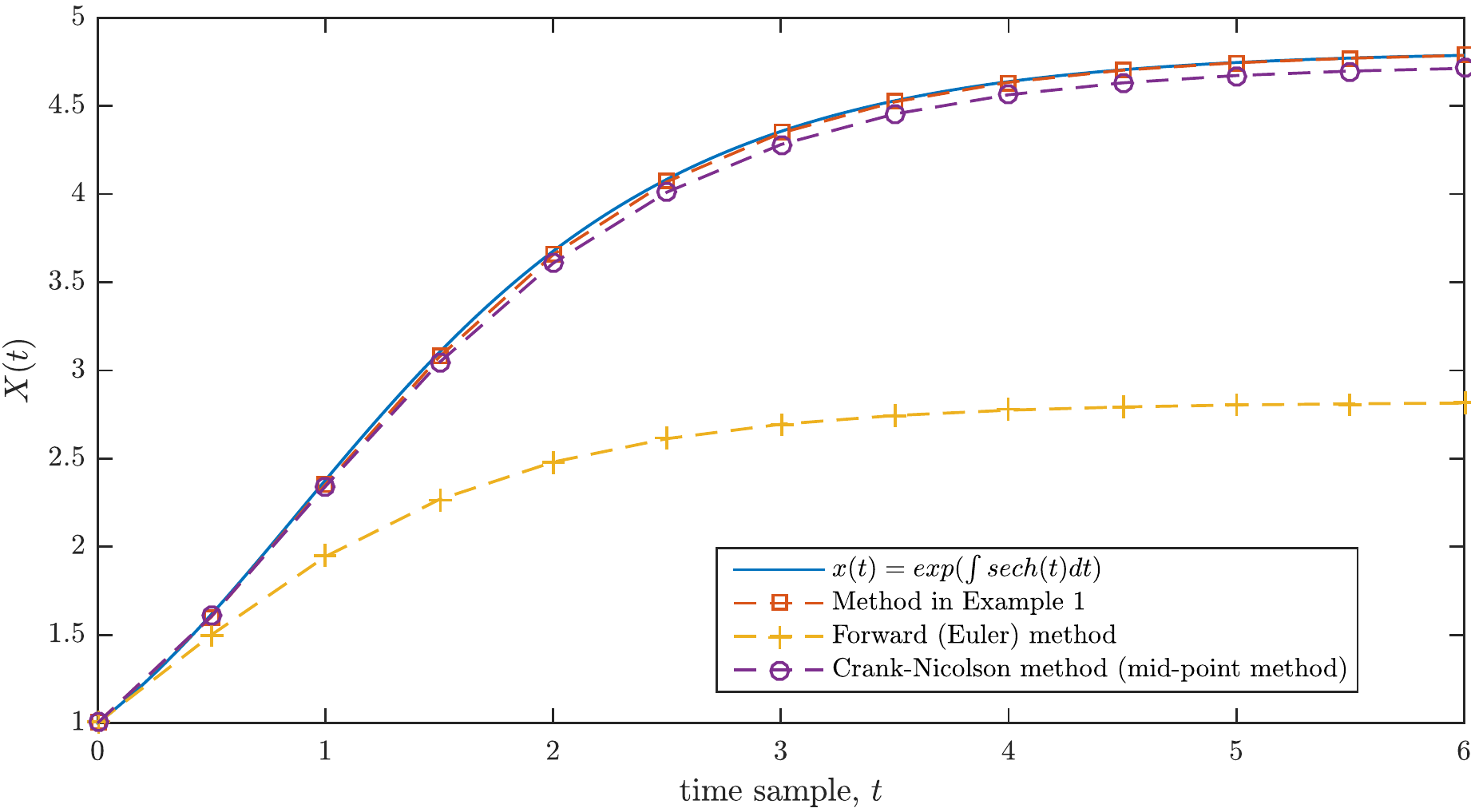}}
\put(0,0){\includegraphics[width=.5\unitlength]{example1_t.pdf}}
\put(0.5,0){\includegraphics[width=.5\unitlength]{example1_sech.pdf}}
\end{picture}
\caption{\label{fig:example1} The comparison of the trapezoid method in Example 1 with Crank-Nicolson method and the forward (Euler) method for two test functions $f(t)=t$ (left figure) and $f(t)=\text{sech}(t)$ (right figure).}
\end{figure}
It would be more convenient if we could write   
the solution of differential equation \eqref{eq:psit}
as follows:
\begin{equation*}
\psi(t;\lambda)=
\exp\left(\int_{-T_0}^t F(\tau;\lambda)\text{d}\tau \right)
\begin{pmatrix}
1\\0
\end{pmatrix}, \text{for } t\geq -T_0 
\end{equation*}
where 
\begin{equation*}
F(\tau;\lambda)=
\begin{pmatrix}
0 & q(\tau)e^{+j2\lambda \tau}\\
-q^*(\tau)e^{-j2\lambda \tau} & 0
\end{pmatrix}.
\end{equation*}
This is not allowed since $F(\tau_1;\lambda)F(\tau_2;\lambda)\neq F(\tau_2;\lambda)F(\tau_1;\lambda)$. Nevertheless, we can still define the step function
$\tilde{F}(t;\lambda)$ which follows the trapezoid rule of integral. Let $h = 2T_0/N$ where $N$ is the
discretization number and $t_n=-T_0+nh$. Define
\begin{equation*}
\tilde{F}(t;\lambda)=
\begin{cases}
F(-T_0;\lambda)&-T_0\leq t<-T_0+h/2\\
F(t_n;\lambda)& t_n-h/2\leq t<t_n+h/2 \text{ and } 1\leq n<N\\
F(T_0;\lambda)& t_N-h/2\leq t \leq T_0
\end{cases}
\end{equation*}
Now consider again \eqref{eq:psit} in the interval $t\in[t_n,t_{n+1})$.
\begin{equation*}
\frac{\partial}{\partial t}\psi(t;\lambda) =
F(t;\lambda)
\psi(t;\lambda)\approx \tilde{F}(t;\lambda) \psi(t;\lambda).
\end{equation*}
Partitioning  $[t_n,t_{n+1})=[t_n,t_n+\frac{h}{2})\cup [t_{n-1}-\frac{h}{2},t_{n+1})$, we have
\begin{equation*}
\psi(t_{n+1};\lambda) \approx\exp(F(t_{n+1};\lambda)\frac{h}{2})
\exp(F(t_{n};\lambda)\frac{h}{2})\psi(t_n;\lambda).
\end{equation*}
The above equation is similar to the equation \eqref{eq:scalerupdate} which is obtained from
 the trapezoid rule of integral for a scalar differential equation. We can define
\begin{equation}
\label{eq:update1}
w_{n+1}=\exp(F(t_{n+1};\lambda)h)w_{n},\;\; w_0=\exp(F(t_0;\lambda)\frac{h}{2})\begin{pmatrix}
1\\0\end{pmatrix}
\end{equation} 
Therefore, $\psi(t_{n})\approx \exp(-F(t_{n};\lambda)\frac{h}{2})w_n$, and 
\begin{equation}
\label{eq:ab_calc}
\begin{pmatrix}
a_N(\lambda)\\b_N(\lambda)
\end{pmatrix}= \exp(-F(t_{N};\lambda)\frac{h}{2})w_N,
\end{equation}
where $a_N(\lambda)$ and $b_N(\lambda)$ are the numerical approximation of $a(\lambda)$
and $b(\lambda)$. As $N\to\infty$, $a_N(\lambda)\to a(\lambda)$ and $b_N(\lambda)\to b(\lambda)$.
Let us now compute $\exp(F(t_n;\lambda)h)$. Let $|q_n|=|q(t_{n})|$ and $e^{j\theta_n}=\frac{q(t_{n})}{|q(t_{n})|}$. We can verify by the eigenvalue decomposition that
\begin{equation*}
F(t_n;\lambda)
=\frac{1}{2}
\begin{pmatrix}
1& j e^{j\theta_n+j2\lambda t_n}\\
j e^{-j\theta_n-j2\lambda t_n}& 1
\end{pmatrix}
\begin{pmatrix}
j|q_n|& 0\\
0& -j|q_n|
\end{pmatrix}
\begin{pmatrix}
1& -j e^{j\theta_n+j2\lambda t_n}\\
-j e^{-j\theta_n-j2\lambda t_n}& 1
\end{pmatrix},
\end{equation*}
and thus,
\begin{align*}
\exp(F(t_n;\lambda)h)
&=\frac{1}{2}
\begin{pmatrix}
1& j e^{j\theta_n+j2\lambda t_n}\\
j e^{-j\theta_n-j2\lambda t_n}& 1
\end{pmatrix}
\begin{pmatrix}
e^{j|q_n|h}& 0\\
0& e^{-j|q_n|h}
\end{pmatrix}
\begin{pmatrix}
1& -j e^{j\theta_n+j2\lambda t_n}\\
-j e^{-j\theta_n-j2\lambda t_n}& 1
\end{pmatrix}\\
&=
\begin{pmatrix}
\cos(|q_n|h)& \sin(|q_n|h)e^{j\theta_n+j2\lambda t_n}\\
-\sin(|q_n|h)e^{-j\theta_n-j2\lambda t_n}&\cos(|q_n|h)
\end{pmatrix}
\end{align*}
Note that $\exp(\pm F(t_n;\lambda)\frac{h}{2})$ is obtained by replacing $h$ with $\pm \frac{h}{2}$
in the above matrix. Consequently,
\begin{equation}
\label{eq:update2}
w_{n}=\begin{pmatrix}
\cos(|q_n|h)& \sin(|q_n|h)e^{j\theta_n+j2\lambda t_n}\\
-\sin(|q_n|h)e^{-j\theta_n-j2\lambda t_n}&\cos(|q_n|h)
\end{pmatrix}
w_{n-1}
\end{equation}

To compute the discrete spectrum, we need to estimate $a'(\lambda)=\frac{d}{d\lambda}a(\lambda)$. Define
\begin{equation*}
G_n=
\begin{pmatrix}
\cos(|q_n|h)&\sin(|q_n|h)e^{j\theta_n+j2\lambda t_n}\\
-\sin(|q_n|h)e^{-j\theta_n-j2\lambda t_n}&\cos(|q_n|h)
\end{pmatrix},
\end{equation*} 
and,
\begin{equation*}
G'_n= +j2t_n\sin(|q_n|h)
\begin{pmatrix}
0&e^{j\theta_n+j2\lambda t_n}\\
e^{-j\theta_n-j2\lambda t_n}&0
\end{pmatrix}.
\end{equation*}
Note that $\exp(\pm F(t_n;\lambda)\frac{h}{2})=G_n^{\pm\frac{1}{2}}$.
Differentiating from \eqref{eq:update1},
\begin{equation}
\label{eq:update_deriv}
w'_{n+1} = G_{n+1}w'_n + G'_{n+1} w_n,\;\;w'_0=(G^{\frac{1}{2}}_0)'\begin{pmatrix}
1\\0
\end{pmatrix}
\end{equation}
where $w'_{n+1}=(\frac{d}{d\lambda}w_{(n+1,1)},\frac{d}{d\lambda}w_{(n+1,2)})^T$.
We can also verify that
\begin{align*}
\psi'(t_n;\lambda)&\approx G_n^{-\frac{1}{2}}\left( w'_n-2jt_n\sin(|q_n|\frac{h}{2})
\begin{pmatrix}
0&e^{j\theta_n+j2\lambda t_n}\\
e^{-j\theta_n-j2\lambda t_n}&0
\end{pmatrix} G_n^{-\frac{1}{2}}w_n\right)\\
&= G_n^{-\frac{1}{2}}w'_n -2jt_n\sin(|q_n|\frac{h}{2})
\begin{pmatrix}
0&e^{j\theta_n+j2\lambda t_n}\\
e^{-j\theta_n-j2\lambda t_n}&0
\end{pmatrix} w_n,
\end{align*}
where 
\begin{equation*}
G_n^{-\frac{1}{2}}=
\begin{pmatrix}
\cos(|q_n|\frac{h}{2})&-\sin(|q_n|\frac{h}{2})e^{j\theta_n+j2\lambda t_n}\\
\sin(|q_n|\frac{h}{2})e^{-j\theta_n-j2\lambda t_n}&\cos(|q_n|\frac{h}{2})
\end{pmatrix}.
\end{equation*}
Finally, we have
\begin{equation}
\label{eq:da_calc}
\begin{pmatrix}
a'(\lambda)\\b'(\lambda)
\end{pmatrix}
\approx
\begin{pmatrix}
a_N'(\lambda)\\b_N'(\lambda)
\end{pmatrix}
=
G_N^{-\frac{1}{2}}w'_N -2jt_N\sin(|q_N|\frac{h}{2})
\begin{pmatrix}
0&e^{j\theta_N+j2\lambda t_N}\\
e^{-j\theta_N-j2\lambda t_N}&0
\end{pmatrix} w_N.
\end{equation}
We can therefore numerically compute $a(\lambda)$, $b(\lambda)$ and $a'(\lambda)$ from \eqref{eq:update1}, \eqref{eq:ab_calc}, \eqref{eq:update_deriv} and \eqref{eq:da_calc}.

\begin{remark}
The final recursion \eqref{eq:update2} looks like the layer-peeling algorithm in \cite{yousefi2013thesis}. However, there are some small differences. The layer-peeling algorithm
is a kind of forward (Euler) algorithms and obtained from approximating $q(t)$ as a piece-wise constant function. Our algorithm is a kind of mid-point algorithms (Trapezoid rule) which usually provides better approximations (see Figure~\ref{fig:example1}). Moreover, we approximate the
matrix $F(t)$ as a piece-wise constant function. The other practical advantage is the equation
\eqref{eq:update2} has a much simpler form than the layer-peeling algorithm.
\end{remark}

\begin{remark}
The first order approximation of $G_n$ gives the forward (Euler) discretizations method~\cite{yousefi2013thesis}. When $|q_n|h\ll 1$, $\cos(|q_n|h)\approx 1$ and $\sin(|q_n|h)\approx|q_n|h$ and thus,
\begin{equation*}
G_n\approx
\begin{pmatrix}
1&h q(t_n)e^{j2\lambda t_n}\\
-hq^*(t_n)e^{-j2\lambda t_n}&1
\end{pmatrix}.
\end{equation*}
The second order approximation of $G_n$ gives the promising Crank-Nicolson method~\cite{yousefi2013thesis}. 
Using the relations $\cos(2\theta)=(1-\tan^2(\theta))/(1+\tan^2(\theta))$ and  
$\sin(2\theta)=(2\tan(\theta))/(1+\tan^2(\theta))$ and approximation $\tan(|q_n|\frac{h}{2})\approx|q_n|\frac{h}{2}$, we have
\begin{equation*}
G_n\approx
\frac{1}{1+\frac{h^2}{4}|q_n|^2}
\begin{pmatrix}
1-\frac{h^2}{4}|q_n|^2&h q(t_n)e^{j2\lambda t_n}\\
-hq^*(t_n)e^{-j2\lambda t_n}&1-\frac{h^2}{4}|q_n|^2
\end{pmatrix},
\end{equation*}
which is the normalized matrix of the Crank-Nicolson method for solving \eqref{eq:psit}, i.e.
\begin{equation*}
\frac{\psi(t_{n+1};\lambda)-\psi(t_{n};\lambda)}{h}\approx
\frac{1}{2}\left( F(t_n;\lambda)\psi(t_{n};\lambda) +F(t_{n+1};\lambda)\psi(t_{n+1};\lambda)\right).
\end{equation*} 
\end{remark}

\subsection{Forward-Backward Method}

Our method and all the one-step discretization NFT methods explained in \cite{yousefi2013thesis}, e.g.
Ablowitz-Ladik algorithms, Crank-Nicolson method, Runge-Kutta method, Layer-peeling method,
are robust in estimation of the continuous spectrum and
can be used to find the eigenvalues ($a(\lambda)=0$) with a small numerical error. However, 
they are very naive in estimation of the discrete spectral amplitudes, $Q_d(\lambda_i)$. The
estimation error may exponentially grow with a slight change of step-size $h$.

Let us intuitively explain how a small numerical error in $a(\lambda)$ may lead to an exponentially
large error in estimating $b(\lambda)$. Consider the temporal equation \eqref{eq:psit} at an eigenvalue of $q(t)$, 
namely $\lambda=\omega + j\eta$ with $\eta>0$. We know that $a(\lambda)=0$ and
$b(\lambda)$ is a finite complex number. Assume that a numerical method estimates
$\bar{\psi}(T_0-1)=\psi(T_0-1)+(\delta_1,\delta_2)^T$ where $(\delta_1,\delta_2)^T$
is its estimation error. 
Now look at \eqref{eq:psit} for $T_0-1<t<T_0$. The changes in $\frac{\partial}{\partial t}\psi_1$
is proportional to $|q(t)|\exp(-2\eta t)\delta_2$ which is very small when $T_0\gg 1$
and thus, $a(\omega + j\eta)$ is finally approximated by $\bar{\psi_1}(T_0)\approx\delta_1$.
However,
the changes in $\frac{\partial}{\partial t}\psi_2$ is
proportional to $|q(t)|\exp(+2\eta t)\delta_1$ which 
can be very large for a large $\eta$. It means that the gap $\bar{\psi}_2(T_0)-b(\lambda)$
can become exponentially large for a small error $\delta_1$ in $a(\lambda)$.
This issue is shown in Figure~\ref{fig:example2}-c. We observe that
$\bar{\psi}_2$ diverges exponentially from the exact solution $\psi_2(t)$ when $h$ is not
small enough.

Let us explain how to significantly stabilize an one-step NFT method by the
forward-backward algorithm. We explain this algorithm based on the trapezoid discretizations
method but the algorithm can be applied to any one-step method. Consider the equation \eqref{eq:update2}. We can write,
\begin{align*}
\begin{pmatrix}
a_N(\lambda)\\
b_N(\lambda)
\end{pmatrix} &= G_N^{-\frac{1}{2}} G_N G_{N-1}\cdots G_{2}G_{1} G_0^{+\frac{1}{2}}
\begin{pmatrix}
1\\
0
\end{pmatrix}\\
&=R_NL_N
\begin{pmatrix}
1\\
0
\end{pmatrix},
\end{align*}
where $R_N=G_N^{-\frac{1}{2}} G_N G_{N-1}\cdots G_{m+1}$ and $L_N=G_m G_{m-1}\cdots G_1 G_0^{+\frac{1}{2}}$ and the integer $m=cN$, $0<c<1$. 
First, let $N\to\infty$. Then, $(a_N(\lambda),b_N(\lambda))\to(a(\lambda),b(\lambda))$ and
$L_N\to L$ where $L$ is the scattering matrix of $q(t)\mathbf{1}\{-T_0<t<T_0(2c-1)\}$
and $R_N\to R$ where $R$ is the scattering matrix of $q(t)\mathbf{1}\{T_0(2c-1)\leq t<T_0\}$. Let $R_{kl}$ and $L_{kl}$ denote the entry $(k,l)$ of matrices $R$ and $L$. Then,
\begin{align*}
a(\lambda)&=R_{11}L_{11} + R_{12}L_{21}\\
a'(\lambda)&=L_{11}\frac{d}{d\lambda}R_{11}+
R_{11}\frac{d}{d\lambda}L_{11} +
L_{21}\frac{d}{d\lambda}R_{12} +
R_{12}\frac{d}{d\lambda}L_{21}\\
b(\lambda)&=R_{21}L_{11} + R_{22}L_{21}
\end{align*}
The determinant of matrices $G_n$ is one. 
It implies that $R$ has determinant one, 
i.e. $R_{11}R_{22}-R_{12}R_{21}=1$. 
Therefore,
\begin{equation*}
R^{-1}=G_{m+1}^{-1}G_{m+2}^{-1}\cdots G_{N-1}^{-1}
G_{N}^{-1}G_{N}^{\frac{1}{2}}=
\begin{pmatrix}
R_{22}&-R_{12}\\
-R_{21}&R_{11}
\end{pmatrix}.
\end{equation*} 
Let $G_{n,kl}$ denote the entry $(k,l)$ of matrix $G_n$. One can verify that 
if $\lambda\in\mathbb{R}$, then $G_{n,22}=G^*_{n,11}$ 
and $G_{n,21}=-G^*_{n,12}$. 
It implies that $R_{22}=R^*_{11}$ and $R_{21}=-R^*_{12}$ for $\lambda\in\mathbb{R}$.
We have
\begin{align*}
\begin{pmatrix}
L_{11}\\
L_{21}
\end{pmatrix} 
 = L\begin{pmatrix}
1\\
0
\end{pmatrix},
\begin{pmatrix}
-R_{12}\\
R_{11}
\end{pmatrix} 
 = R^{-1}\begin{pmatrix}
0\\
1
\end{pmatrix}.
\end{align*}
We already showed $a(\lambda)$ and $a'(\lambda)$ as a function of $L_{11}$, $L_{21}$, $R_{11}$ and $R_{12}$.
We can also represent $b(\lambda)$ in terms of these parameters.
\begin{align*}
b(\lambda)&=R_{21}L_{11} + R_{22}L_{21}\\
&=R_{21}L_{11} + \frac{1+R_{12}R_{21}}{R_{11}}L_{21}\\
&=\frac{R_{21}(R_{11}L_{11} + R_{12}L_{21})+L_{21}}{R_{11}}\\
&=a(\lambda)\frac{R_{21}}{R_{11}} + \frac{L_{21}}{R_{11}}.
\end{align*}
For $\lambda\in\mathbb{R}$,
\begin{equation}
\label{eq:b_real}
b(\lambda) = -a(\lambda)\frac{R^*_{12}}{R_{11}} + \frac{L_{21}}{R_{11}}.
\end{equation}
For eigenvalue $\lambda_i$ of $q(t)$ (i.e. $a(\lambda_i)=0$),
\begin{equation}
\label{eq:b_eigen}
b(\lambda_i) = \frac{L_{21}}{R_{11}}.
\end{equation}
\begin{remark}
In Equation \eqref{eq:b_eigen},
we remove the contribution of $a(\lambda_i)\frac{R_{21}}{R_{11}}$ from $b(\lambda_i)$ as $a(\lambda_i)=0$. This part causes a large numerical error in one-step methods. One can verify that
$R_{21}$ is very large when $\eta=\frac{1}{2}(\lambda-\lambda^*)>0$. Then small numerical error $a_N(\lambda_i)=\delta_1$ leads to a large 
numerical error $\delta_1\frac{R_{21}}{R_{11}}$ in estimation $b_N(\lambda_i)$. 
\end{remark}
Now consider a finite $N$. We estimate $(L_{11},L_{21})$
and $(-R_{12},R_{11})$ from two separate recursions: The \emph{forward
recursion} computes $(L_{11},L_{21})$ and its derivative. For $1\leq n\leq m$,
\begin{align*}
w_{n}&=G_{n}w_{n-1},\; w_0=G_0^{\frac{1}{2}}\begin{pmatrix}
1\\0
\end{pmatrix}\\
w'_{n} &= G_{n}w'_{n-1} + G'_{n} w_{n-1},\;\;w'_0=(G^{\frac{1}{2}}_0)'\begin{pmatrix}
1\\0
\end{pmatrix}
\end{align*} 
Above the equations are the same as \eqref{eq:update2} and \eqref{eq:update_deriv} in which $G_n$, $G'_n$,
$G_0^{\frac{1}{2}}$ and $(G_0^{\frac{1}{2}})'$ are defined. Thus, 
\begin{equation}
\label{eq:L1L2}
\begin{pmatrix}
L_{11}\\
L_{21}
\end{pmatrix}\approx w_m=
\begin{pmatrix}
w_{m,1}\\
w_{m,2}
\end{pmatrix},
\begin{pmatrix}
\frac{d}{d\lambda}L_{11}\\
\frac{d}{d\lambda}L_{21}
\end{pmatrix}\approx
w'_m=
\begin{pmatrix}
w'_{m,1}\\
w'_{m,2}
\end{pmatrix}
\end{equation}
The \emph{backward recursion} computes $(-R_{12},R_{11})$
and its derivative. For $m<n\leq N$,
\begin{align*}
v_{n-1}&=G^{-1}_n v_{n},\; v_N=G_N^{\frac{1}{2}}
\begin{pmatrix}
0\\1
\end{pmatrix}\\
v'_{n-1}&=G^{-1}_n v'_{n}+(G^{-1}_n)'v_{n},\; v'_N=
(G_N^{\frac{1}{2}})'
\begin{pmatrix}
0\\1
\end{pmatrix}
\end{align*}
$G^{-1}_n$ is obtained by replacing $h$ with $-h$ in
$G_n$ as follows:
\begin{equation*}
G^{-1}_n=
\begin{pmatrix}
\cos(|q_n|h)&-\sin(|q_n|h)e^{j\theta_n+j2\lambda t_n}\\
+\sin(|q_n|h)e^{-j\theta_n-j2\lambda t_n}&\cos(|q_n|h)
\end{pmatrix},
\end{equation*} 
and,
\begin{equation*}
(G^{-1}_n)'= -j2t_n\sin(|q_n|h)
\begin{pmatrix}
0&e^{j\theta_n+j2\lambda t_n}\\
e^{-j\theta_n-j2\lambda t_n}&0
\end{pmatrix}.
\end{equation*} 
As a result,
\begin{equation}
\label{eq:R1R2}
\begin{pmatrix}
-R_{12}\\
R_{11}
\end{pmatrix}\approx v_m=
\begin{pmatrix}
v_{m,1}\\
v_{m,2}
\end{pmatrix},
\begin{pmatrix}
-\frac{d}{d\lambda}R_{12}\\
\frac{d}{d\lambda}R_{11}
\end{pmatrix}\approx
v'_m=
\begin{pmatrix}
v'_{m,1}\\
v'_{m,2}
\end{pmatrix}
\end{equation}
From \eqref{eq:L1L2} and \eqref{eq:R1R2}, we have
\begin{align*}
a_N(\lambda)&=w_{m,1}v_{m,2}-v_{m,1}w_{m,2}\\
a'_N(\lambda)&=w'_{m,1}v_{m,2}+w_{m,1}v'_{m,2}-v'_{m,1}w_{m,2}-v_{m,1}w'_{m,2}
\end{align*}
and,
\begin{align*}
b_N(\lambda)&=a_N(\lambda)\frac{v^*_{m,1}}{v_{m,2}}
+ \frac{w_{m,2}}{v_{m,2}},\;\;\text{for } \lambda\in\mathbb{R},\\
b_N(\lambda_i) &= \frac{w_{m,2}}{v_{m,2}},\;\;\text{for } \lambda_i\in\mathbb{C^+} \text{ the eigenvalues of }q(t)
\end{align*}
The main question is how to choose $m=cN$. 
Consider the update matrix $G_n$ for $\lambda=w+j\eta$: $|G_{n,12}|=|q(t_n)|\exp(-2\eta t_n)$ and $|G_{n,21}|=|q(t_n)|\exp(+2\eta t_n)$. The scattering parameter $L_{21}$ is somehow the integral of $G_{n,21}$
and thus, the numerical estimation error in $L_{21}$ becomes large when $G_{n,21}$ is large.
A similar behaviour is between $R_{12}$ and $G_{n,12}$. Therefore, one can choose
\begin{align*}
c=\text{arg}\min_{t=T_0(2c-1)}|q(t)|\exp(2\eta |t|).
\end{align*}
In the following examples, we fixed $c=1/2$ (i.e. $m=N/2$).
\begin{example}[2-solitonic pulse]
Consider a 2-solitonic pulse with eigenvalues $\lambda_1=0.5j$ and $\lambda_2=1j$ and with
$Q_d(\lambda_1)=3$ and $Q_d(\lambda_2)=-6$. This pulse $q(t)$ is symmetric and has only imaginary
component. We truncate the pulse outside the interval $t\in[-5,5]$ in which contains more than $99.9\%$ of pulse energy. The pulse is plotted in Figure~\ref{fig:example2}a.

We uniformly sample the truncated pulse by $N$ samples and apply the following numerical NFT algorithm: Forward-backward algorithm (FB), Trapezoid discretizations (TD),
Crank-Nicolson algorithm (CN) and Ablowitz-Ladik discretizations algorithm (AL). 
Table~\ref{table:2soliton} shows the discrete spectral amplitudes computed by each algorithm in terms of $N$. The forward-backward algorithm is much more precise and stable than the others. 
In particular, it approximates the discrete spectrum with an acceptable precision when $N$ is 
as small as $32$. We plot the numerical estimation of $\psi(t_n)$ from temporal equation
\eqref{eq:psit} using the FB algorithm and AL discretizations algorithm
in Figure~\ref{fig:example2}b-d when $N=64$. We plot also the exact solution by letting $N=10^6$. 
One can see how the estimations of the FB algorithm 
closely follow the exact solution for both eigenvalues.

\begin{table}
\caption{\label{table:2soliton} The discrete spectral amplitudes of a 2-solitonic pulse from different algorithms in terms of $N$.} 
\centering 
\begin{tabular}{c| c c c c| c c c c |c c c c}
\hline\hline 
Spectrum & \multicolumn{4}{c|}{N=32}& \multicolumn{4}{c|}{ $N=64$ }& \multicolumn{4}{c}{ $N=1024$ }\\
 & AL & CN & TD & FB & AL & CN & TD & FB & AL & CN & TD & FB \\
\hline
 $a(\lambda=0.5j)$   & 0.12 &  3.0e-2 &  -1.3e-2   &  -1.3e-2  &  4.0e-2  &  8.3e-3   &  -3.2e-3   &  -3.2e-3  & -9.9e-5 &  -2.4e-4  &  -2.8e-4  &  -2.8e-4  \\
$Q_d(\lambda=0.5j)$  & \textbf{-4.34}  &  \textbf{1.15}   & \textbf{ 3.47}   & \textbf{ 2.75}  &  \textbf{0.825}  &  \textbf{ 2.48}  &   \textbf{3.13}  &   \textbf{2.94 } & \textbf{ 3.03}  &  \textbf{3.02}  &  \textbf{3.04}  &  \textbf{3.01 } \\
%\hline
 $a(\lambda=1j)$    &  0.11   &  1.3e-2   &  -2.9e-2   &  -2.9e-2  &  3.4e-2  &  4.1e-3   &  -6.9e-3  &  -6.9e-3  &  1.5e-4  &  1.6e-5  &  -2.8e-5  &  -2.8e-5 \\
$Q_d(\lambda=1j)$  &  \textbf{819.3}   &  \textbf{78.57}   &  \textbf{-197.1}   &  \textbf{-7.589}  &  \textbf{189.4}  &  \textbf{15.46}   &  \textbf{-42.35}  &  \textbf{-6.292}   &  \textbf{-5.318}  &  \textbf{-5.913}  &  \textbf{-6.130}  &  \textbf{-5.991} \\
\hline
\end{tabular}
\end{table}

\end{example}

\begin{figure}[t]
\centering
\setlength{\unitlength}{\textwidth}
\begin{picture}(1,.7)(0,0)
\put(-0.01,0.36){\includegraphics[width=.5\unitlength]{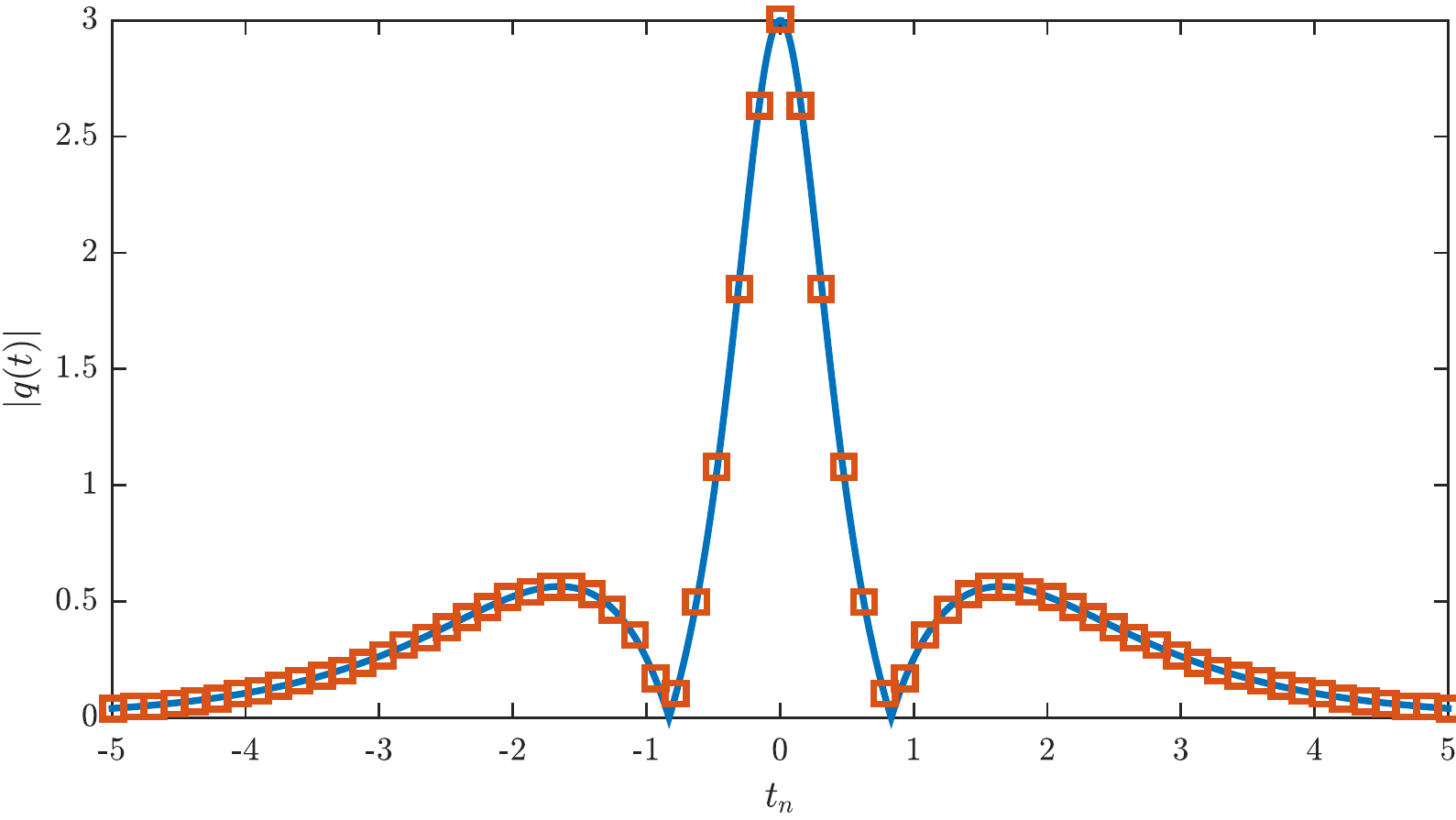}}
\put(0.5,0.36){\includegraphics[width=.5\unitlength]{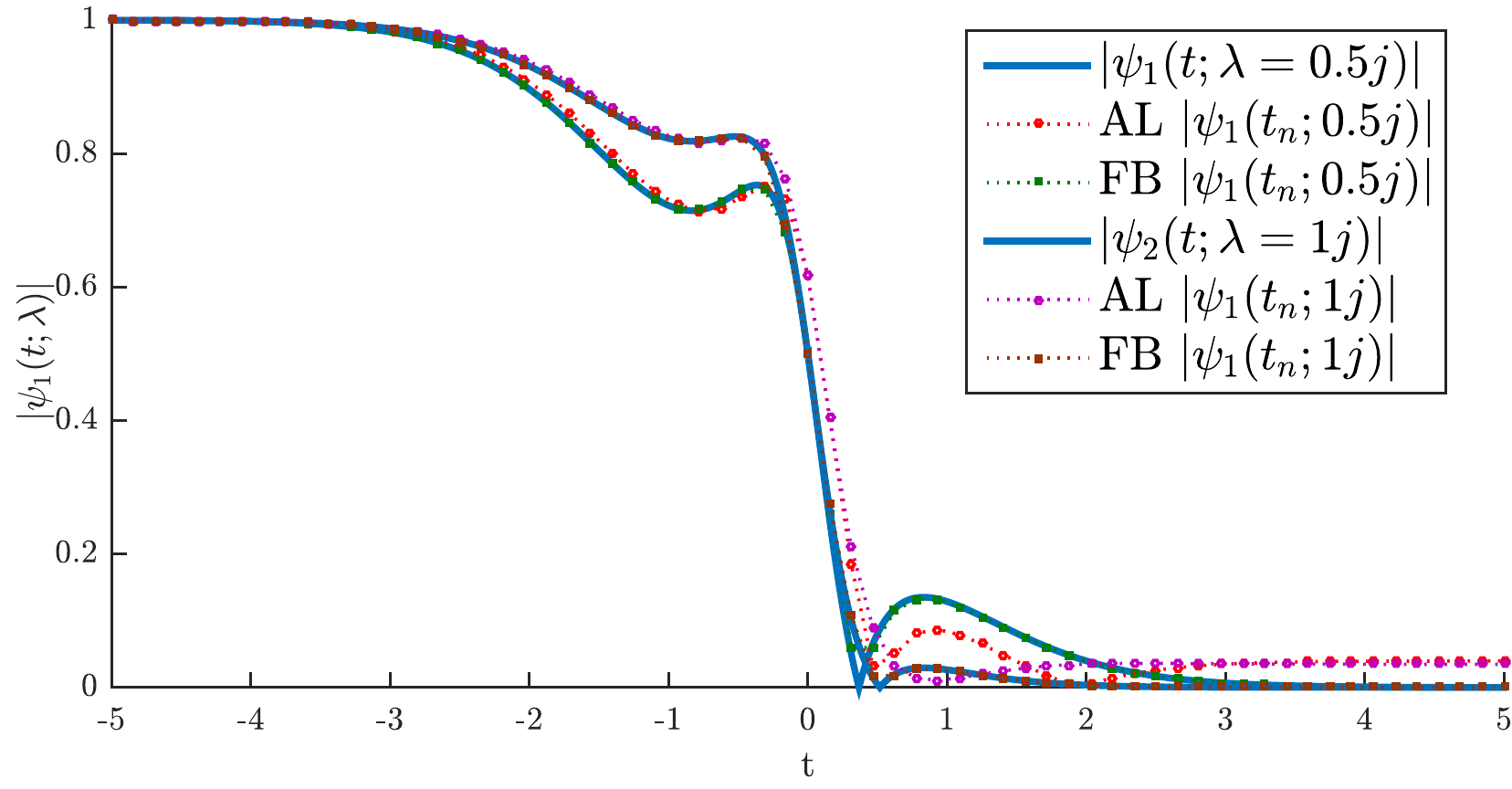}}
\put(-0.01,0.02){\includegraphics[width=.5\unitlength]{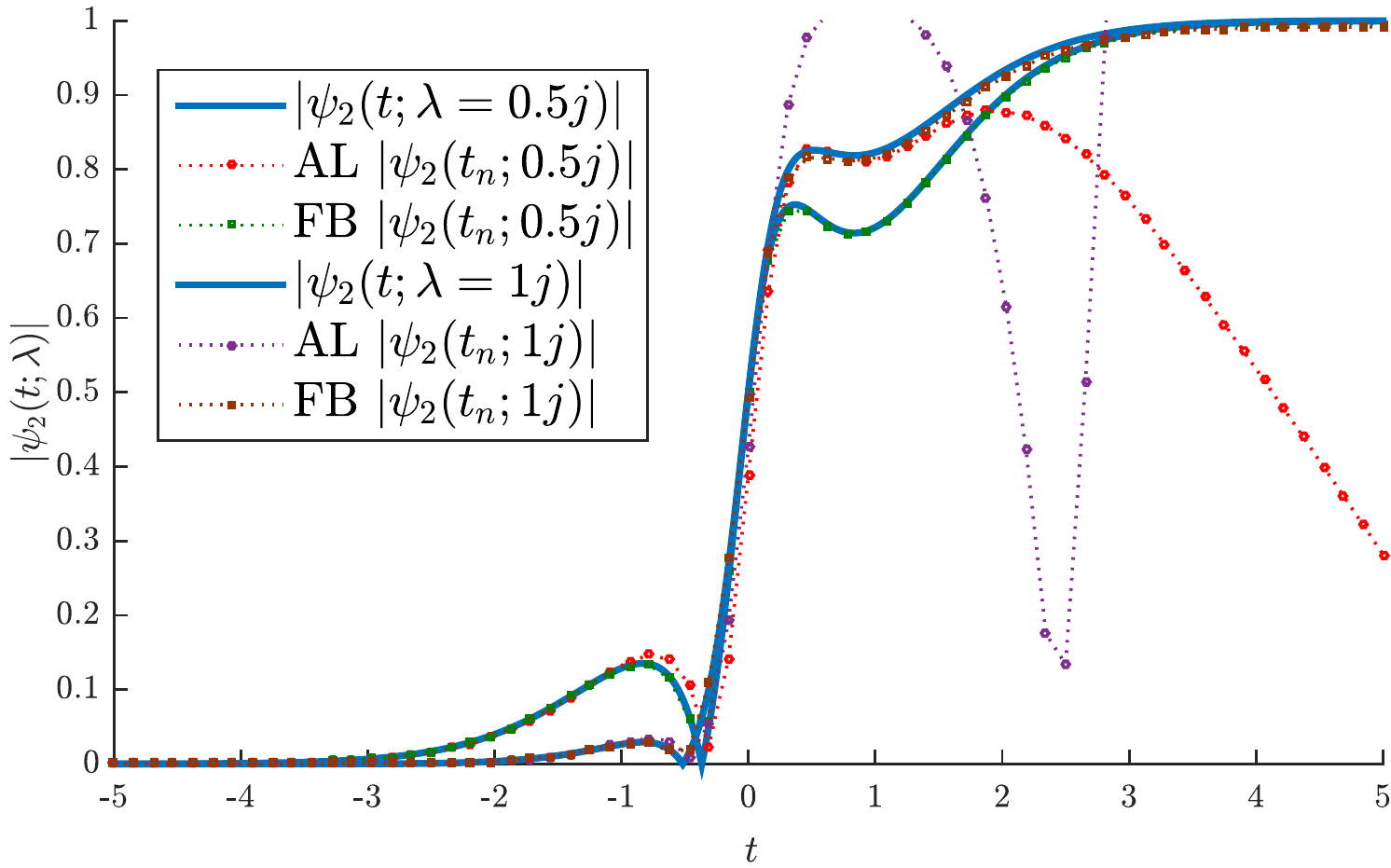}}
\put(0.5,0.02){\includegraphics[width=.5\unitlength]{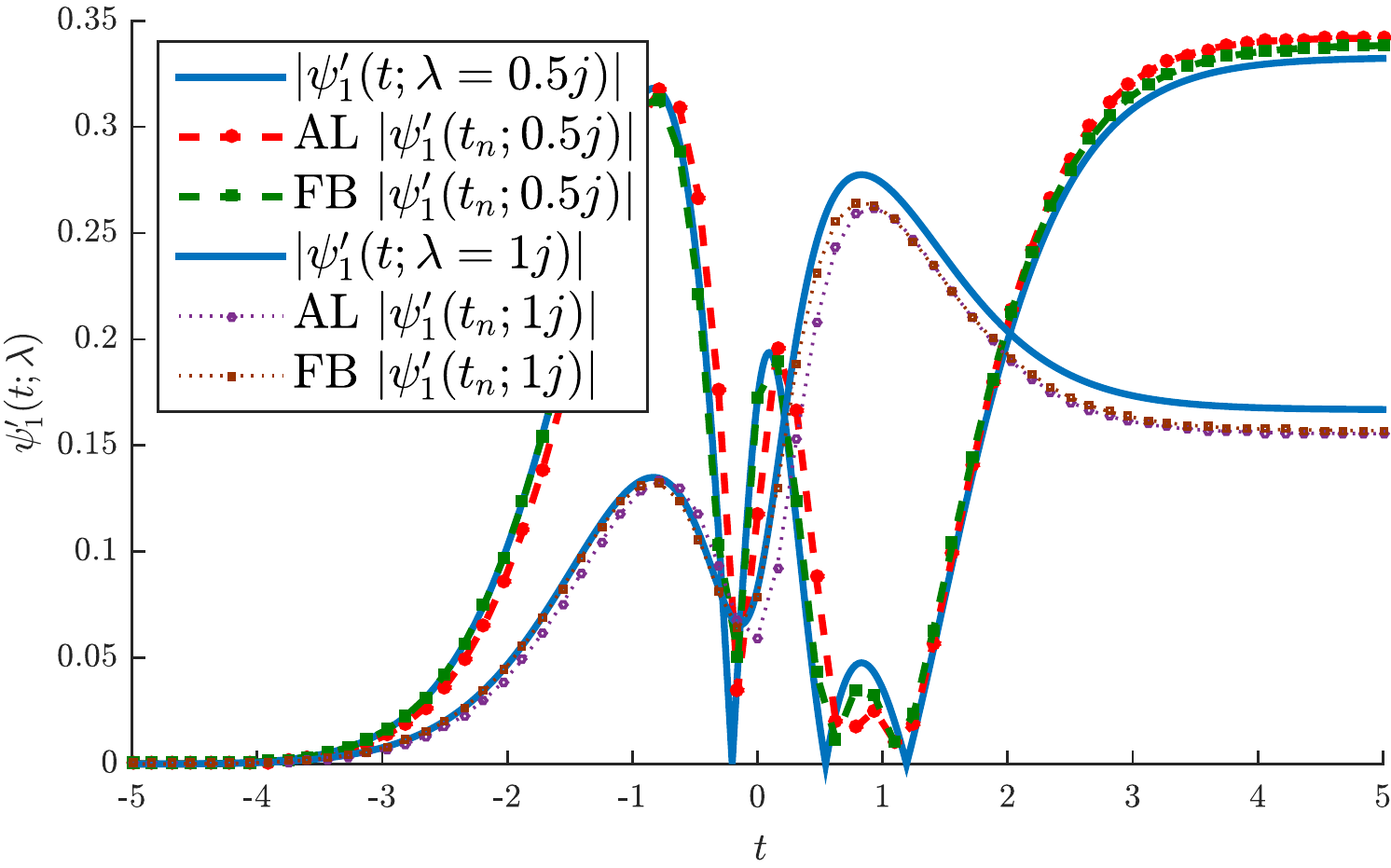}}
%\put(-0.01,0.36){\includegraphics[width=.5\unitlength]{figures/example2_2soliton.pdf}}
%\put(0.5,0.36){\includegraphics[width=.5\unitlength]{figures/example2_2soliton_a.pdf}}
%\put(-0.01,0.02){\includegraphics[width=.5\unitlength]{figures/example2_2soliton_b.pdf}}
%\put(0.5,0.02){\includegraphics[width=.5\unitlength]{figures/example2_2soliton_da.pdf}}
\put(0.25,0.34){$(a)$}
\put(0.76,0.34){$(b)$}
\put(0.25,0.0){$(c)$}
\put(0.76,0.0){$(d)$}
\end{picture}
\caption{\label{fig:example2} $(a)$ The absolute value of 2-solitonic pulse $q(t)$ of Example 2 with $N=64$ samples. 
The solution of temporal equation \eqref{eq:psit}, $\psi(t;\lambda)=(\psi_1(t;\lambda),\psi_2(t;\lambda))$, is numerically computed using 
Ablowitz-Ladick (AL) algorithm and forward-backward (FB) algorithm from $N=64$ samples of $q(t)$. The solid blue curves are the exact solutions:$(b)$ $\psi_1(t;\lambda)$, $(c)$ $\psi_2(t;\lambda)$,
$(d)$ $\frac{d}{d\lambda}\psi_1(t;\lambda)$.}
\end{figure}

% if have a single appendix:
%\appendix[Proof of the Zonklar Equations]
% or
%\appendix  % for no appendix heading
% do not use \section anymore after \appendix, only \section*
% is possibly needed

% use appendices with more than one appendix
% then use \section to start each appendix
% you must declare a \section before using any
% \subsection or using \label (\appendices by itself
% starts a section numbered zero.)
%

%
%\appendices
%\section{Proof of the First Zonklar Equation}
%Appendix one text goes here.
%
%% you can choose not to have a title for an appendix
%% if you want by leaving the argument blank
%\section{}
%Appendix two text goes here.
%
%
%% use section* for acknowledgment
%\section*{Acknowledgment}
%
%
%The authors would like to thank...

% Can use something like this to put references on a page
% by themselves when using endfloat and the captionsoff option.
\ifCLASSOPTIONcaptionsoff
  \newpage
\fi

\bibliographystyle{IEEEtran}
\bibliography{references_nft}

\end{document}